\documentclass{amsart}

\usepackage[english]{babel}

\usepackage[letterpaper,top=2cm,bottom=2cm,left=3cm,right=3cm,marginparwidth=1.75cm]{geometry}

\usepackage{amsfonts} 
\usepackage{amsmath}
\usepackage{amsthm}
\usepackage{amssymb}
\usepackage{graphicx}
\usepackage{enumerate}
\usepackage[colorlinks=true, allcolors=blue]{hyperref}

\theoremstyle{plain}
\newtheorem{thm}{Theorem}[section]
\newtheorem{defi}[thm]{Definition} 
\newtheorem{prop}[thm]{Proposition}
\newtheorem{lemma}[thm]{Lemma}

\newtheorem{cor}[thm]{Corollary} 
\newtheorem{rk}[thm]{Remark}
\newtheorem{notation}[thm]{Notation}

\newcommand{\nn}{\mathbb{N}}

\newcommand{\rr}{\mathbb{R}}
\newcommand{\cc}{\mathbb{C}}

\newcommand{\mycomment}[1]{}

\newcommand{\bC}{{\mathbb{C}}}

\newcommand{\bN}{{\mathbb{N}}}

\newcommand{\bR}{{\mathbb{R}}}

  \newcommand{\A}{{\mathcal{A}}}
  \newcommand{\B}{{\mathcal{B}}}

  \newcommand{\M}{{\mathcal{M}}}
  \newcommand{\N}{{\mathcal{N}}}
\renewcommand{\O}{{\mathcal{O}}}

\renewcommand{\leq}{\leqslant}
\renewcommand{\geq}{\geqslant}

\renewcommand{\phi}{\varphi}
\newcommand{\upchi}{{\raise.35ex\hbox{\ensuremath{\chi}}}}
\newcommand{\eps}{\varepsilon}
\title{Schur tests and estimates for positive semigroups on non commutative $L_p$}
\author[C. Coine]{Cl\'ement Coine}\address{UNICAEN, CNRS, LMNO, 14000 Caen, France}
\email{clement.coine@unicaen.fr}
\author[M. Laforge]{M\'elina Laforge}\address{UNICAEN, CNRS, LMNO, 14000 Caen, France}
\email{melina.laforge@unicaen.fr}
\author[\'E. Ricard]{\'Eric Ricard}\address{UNICAEN, CNRS, LMNO, 14000 Caen, France}
\email{eric.ricard@unicaen.fr}

\begin{document}
\maketitle

\begin{abstract}
We extend the recent results of Arnold on estimates for Kreiss positive semigroups on $L_p$-spaces to the non commutative setting, $1<p<\infty$. We also provide a sharp example. All the results are achieved thanks to the use of the Schur test, including a new version for von Neumann algebras.  
\end{abstract}


\section{Introduction}

The study of the growth rate of powers of bounded operators and of $C_0$-semigroups is a central topic in operator theory. A fundamental approach to this problem is to relate the asymptotic behavior of an operator or semigroup to estimates on its resolvent, thereby connecting dynamical properties with spectral information. A cornerstone of this approach is the Hille--Yosida theorem, which characterizes the generators of bounded $C_0$-semigroups in terms of resolvent estimates. In the discrete-time setting, the Kreiss matrix theorem provides a sharp bound for the iterates of a matrix in terms of its Kreiss constant, and more generally the Kreiss condition provides a natural resolvent criterion for controlling the growth of powers of an operator.

The aim of this paper is to study the optimal growth rates for positive Kreiss bounded operator and $C_0$-semigroups on non commutative $L_p$-spaces associated with semifinite von Neumann algebras, extending recent results obtained in the commutative setting.\\

\noindent \textbf{Background and notations.} Let $X$ be a complex Banach space. For an operator $A$ on $X$ (not necessarily bounded), we let, for every $\lambda \in \rho(A)$ (the resolvent set of $A$),
$$
R(\lambda, A) :=(\lambda I - A)^{-1}.
$$
Let $T$ be a bounded linear operator on $X$ with $\sigma(T)\subset \overline{\mathbb D}$. We denote
$$K_T = \sup_{| \lambda | > 1} \, (| \lambda | -1) \| R(\lambda,T) \|$$
and we say that $T$ is Kreiss bounded if $K_T < \infty$.
For the continuous-time analogue, we say that a $C_0$-semigroup $T=(T_t)_{t\geq 0}$ with generator $A$ is Kreiss bounded if every $\lambda \in \mathbb{C}$, $\text{Re}(\lambda)>0$, is in $\rho(A)$ and $K_T < \infty$ where
$$K_T = \sup_{ \text{Re}(\lambda) > 0}  \, \text{Re}(\lambda) \| R(\lambda,A) \|.$$

The Kreiss resolvent condition is a classical spectral condition. It provides quantitative information on the growth of powers of an operator, or of a $C_0$-semigroup. If $X$ is finite dimensional, the Kreiss matrix theorem states that a matrix $M$ is power bounded (respectively, the semigroup is bounded) if and only if $M$ (resp. the semigroup) is Kreiss bounded, see e.g. \cite{LevTre}. When $X$ is infinite dimensional, this condition does not, in general, imply power boundedness. See \cite[Example 3.1]{Eisner2006} where Eisner and Zwart built a positive Kreiss bounded semigroup with exponential growth. However, on a Hilbert space or for positive semigroups and operators on commutative $L_p$-spaces, the Kreiss condition yields a more restrictive growth rate. For the Hilbert case, in \cite{Eisner2006}, the authors showed the estimate $\|T_t\| = \mathcal{O}(t)$. An improvement was obtained in \cite{Arnold2} by Arnold, who proved that $\|T_t\|=\mathcal{O}(t/ \sqrt{\log(t+1)})$ until, very recently, he showed that $\|T_t\| = \mathcal{O}(t^{1-\epsilon_K})$ where $\epsilon_K$ depends explicitly on the Kreiss constant, see \cite{Arnold3}. These improvements rely in an essential way on the Hilbert space structure, in particular on the availability of $L_2$-based arguments and Plancherel-type identities, which allow one to exploit the resolvent estimate more efficiently than in a general Banach space. Conversely, for every $\epsilon > 0$, there is a Kreiss bounded semigroup on a Hilbert space satisfying $\|T_t\| \gtrsim t^{1-\epsilon}$, see \cite[Example 4.4]{Eisner2006}.

In the discrete setting, the situation is better on general Banach spaces. Indeed, the Kreiss resolvent condition implies the linear estimate $\|T^n\| = \mathcal{O}(n)$, see \cite{LubichNev}. This order of growth cannot be improved in general without further assumptions on the Banach space; see \cite{Nev2001}. On a Hilbert space $H$, however, better upper estimates are available. In \cite{CCEL}, the authors proved that if $T$ is Kreiss bounded on $H$, then $\|T^n\|=\mathcal{O}(n/\sqrt{\log(n)})$. This bound was recently improved in \cite{Arnold3} where it was shown that for such a $T$, we have $\|T^n\| = \mathcal{O}(n^{1-\epsilon_T})$. As in the continuous case, for every $\epsilon > 0$, there is a Kreiss bounded bounded operator $T$ on a Hilbert space such that $\|T^n\| \gtrsim n^{1-\epsilon}$, see \cite{Spijker2003} or \cite[Theorem 2.3]{Bonilla2021}.

Related estimates for positive semigroups $(T_t)_{t\geq 0}$ and operators $T$ on $L_p(\Omega)$ were obtained in \cite{Arnold} where the authors showed that $\|T_t\| = \mathcal{O}(t/\log(1+t)^{1/p \wedge 1/p'})$ and $\|T^n\|=\mathcal{O}(n/\log(1+n)^{1/p \wedge 1/p'})$, until the bounds $\mathcal{O}(t^{1-\epsilon})$ and $\mathcal{O}(n^{1-\epsilon})$ were obtained in \cite{Arnold3}. These results rely on techniques developed in the study of stability of positive semigroups, notably Weis's result concerning convolution-valued functions with a positive operator kernel, see \cite{Wei98}.

In this paper, we extend these polynomial growth estimates to positive Kreiss bounded operators and $C_0$-semigroups on non commutative $L_p$-spaces associated with semifinite von Neumann algebras, for $1<p<\infty$. More precisely, we prove that
$$
\|T^n\|=\mathcal{O}(n^{1-\delta_T})
\qquad\text{and}\qquad
\|T_t\|=\mathcal{O}(t^{1-\delta_T}),
$$
respectively, for some $\delta_T\in(0,1)$. In the Hilbert space setting, our method also yields similar bounds for Kreiss bounded operators and semigroups, providing short alternative proofs of Arnold's results \cite{Arnold3}.
We provide examples showing that these bounds can not be improved for $1<p<\infty$, extending the Hilbert case. The main point here is that our arguments are rather simple based on the Schur test and characterization of Ces\`aro (or mean) bounded operators.
\\

Another classical question concerning $C_0$-semigroups is whether the spectral bound of the generator coincides with the growth bound of the semigroup. For a $C_0$-semigroup $T=(T_t)_{t\geq 0}$ with generator $A$, we define the spectral bounded of $A$ by
$$s(A)=\sup\{ {\rm Re\,} \lambda ; \;\lambda\in \sigma(A)\}$$
and the growth bound of $T$ by
$$\omega(T)=\inf\{ \omega\in \bR\, |\, \sup_{t>0} e^{-t\omega} \|T_t\|<\infty\}.$$
It is well-known that $s(A)\leq \omega(T)$ for all semigroups, see Theorem 5.1.9 in \cite[Theorem 5.1.9]{Arendt}. However, in general, there is no equality, as shown by the classical counterexample \cite{Zab}. On the other hand, it is a classical result by Weis \cite{Wei95, Wei98} that for a positive semigroup on a commutative $L_p$-space, $s(A)=\omega(T)$. In the non commutative setting, partial results were obtained in \cite{Prajapati}. We show in this paper that this equality remains valid in the non commutative setting.\\

The main tool that allows us to pass to the non commutative setting is an appropriate version of the reciprocal of the Schur test.
Usually, the Schur test gives a bound of an integral operator on a commutative $L_p$. When the kernel is positive, this is actually a characterization. In \cite{Pis94}, Pisier conceptualized it using complex interpolation of operators and obtained very nice formulations for regular operators (linear combinations of positive operators). This allowed him to
get a non commutative extension in \cite{Pis95} but only for the Schatten $p$-classes (non commutative $L_p$ based on $B(H)$ as von Neumann algebra) and for completely bounded maps. The main drawback in his approach  was the use of deep analytic factorization theorems and the structure of completely positive maps in $B(H)$. Here we adopt a much simpler approach very close to what is usually done in the commutative case where the interpolation is given by a very explicit form. As corollaries, we get answers to old problems in the field: the norm of a positive map on a non commutative $L_p$ is almost achieved on its positive cone and the complete
norm of a completely positive map coincides with its norm. We also get an estimate for operator convolutions that yields all the estimates for positive semigroups.

\bigskip

\noindent \textbf{Organization of the paper.} In Section 2, we recall the classical Schur test and establish the non commutative version that will be used later, together with several consequences for positive and completely positive maps and for operator-valued convolution. In Section 3 we study positive Kreiss bounded operators and semigroups. We first treat the discrete case, proving the equivalence between Kreiss and Cesàro boundedness and establishing the $\mathcal{O}(n^{1-\delta_T})$ estimate. Actually, all the proofs for these norme estimates follow the exact same pattern. We then turn to semigroups and obtain the corresponding $\mathcal{O}(t^{1-\delta_T})$ estimate. The final part of the section is devoted to spectral bounds and the equality between spectral and growth bounds for positive semigroups on non commutative $L_p$-spaces. Finally, Section 4 is devoted to optimal examples. We introduce the relevant weighted spaces, establish the necessary $A_p$-estimates and, by means of the Schur test, construct a positive Kreiss bounded translation semigroup whose norms grow like $t^\gamma$ for a fixed $\gamma \in (0,1)$, showing that the sublinear estimates obtained in the preceding sections are optimal.







\section{The Schur test}

 We use standard notation and refer to \cite{DPS} for non commutative $L_p$.
For the reader not familiar with this theory, one can think of 
$(M,\tau)$ as $B(H)$  with its usual (unbounded) trace as an example of semifinite von Neumann algebra. In this situation 
$(B(H),{\rm Tr})$ is finite iff $H$ is finite dimensional.

\bigskip

Let us recall the classical Schur test for a (positive) integral operator 
$T:L_p(Y)\to L_p(X)$, $1<p<\infty$, between $\sigma$-finite measure space
with measurable  kernel $K:X\times Y\to \rr^+$, i.e. for $f\in L_p(Y)^+$, $x\in X$ a.e.:
$$T(f)(x) =  \int_Y K(x,y)f(y)dy.$$

\begin{prop}[The Schur test]\label{schurtest}
  If there exist $A,B:X\times Y\rightarrow \rr^+$ measurable
  and constants $\alpha$, $\beta$ such that,
  $K\leq A^{1-\frac 1 p}B^{\frac1 p}$,
$$\int_Y  A(x,y)dy \leq \alpha \text{  a.e. in $x$}$$
and,
$$\int_X  B(x,y)dx \leq \beta \text{ a.e. in  $y$.}$$
Then, the operator $T$ can be extended to an operator on $L_p(Y)\to L_p(X)$ with norm bounded by $\alpha^{1-\frac 1p}\beta^{\frac 1p}$. 
\end{prop}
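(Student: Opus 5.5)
The plan is the standard Hölder argument. Since $K\geq 0$, it suffices to bound $T$ on nonnegative functions. For a general $f$ we have $|T f|\leq T|f|$ pointwise, and this also shows that the integral converges absolutely a.e. once $T|f|\in L_p$. So fix $f\in L_p(Y)^+$, and work first with $f$ in a dense class, e.g. simple functions supported on sets of finite measure; this is harmless since $\sigma$-finiteness and monotone convergence let us pass to general $f$ at the end.

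First, for a.e. $x$, write $K(x,y)f(y)\leq A(x,y)^{1/p'}\cdot B(x,y)^{1/p}f(y)$, where $1/p'=1-1/p$. Hölder's inequality in $y$ with exponents $p'$ and $p$ gives
$$T f(x)\leq \Big(\int_Y A(x,y)\,dy\Big)^{1/p'}\Big(\int_Y B(x,y)f(y)^p\,dy\Big)^{1/p}\leq \alpha^{1/p'}\Big(\int_Y B(x,y)f(y)^p\,dy\Big)^{1/p}.$$

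Second, raise this to the power $p$ and integrate in $x$. Tonelli's theorem applies because all quantities are nonnegative and measurable on the $\sigma$-finite product. It gives
$$\|Tf\|_p^p\leq \alpha^{p-1}\int_Y\Big(\int_X B(x,y)\,dx\Big)f(y)^p\,dy\leq \alpha^{p-1}\beta\,\|f\|_p^p.$$
Taking $p$-th roots yields $\|Tf\|_p\leq \alpha^{1-1/p}\beta^{1/p}\|f\|_p$.

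Finally, extend to all of $L_p(Y)$ by splitting $f$ into real and imaginary, positive and negative parts, or directly via $|Tf|\leq T|f|$, which gives the same constant. Density of the simple functions then gives the unique bounded extension with the stated norm bound.

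There is no real obstacle. The only points needing care are measurability of $x\mapsto\int B(x,y)f(y)^p\,dy$ and the justification of Tonelli, both of which follow from the joint measurability of $K$, $A$, $B$ and the $\sigma$-finiteness hypothesis. The a.e. qualifiers in the hypotheses only affect null sets, which do not change the integrals.
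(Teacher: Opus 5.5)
Your proof is correct. It is the standard argument: H\"older in $y$ with exponents $p'$ and $p$, then Tonelli in $x$. The constant $\alpha^{1/p'}\beta^{1/p}$ comes out exactly, and the pointwise bound $|Tf|\leq T|f|$ handles complex $f$ without losing any constant.

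The paper gives no proof of this proposition; it is simply recalled as classical, with a pointer to Grafakos for the converse. So there is no argument of the paper's to compare yours against. One minor remark: your detour through simple functions and density is unnecessary. The same H\"older--Tonelli computation holds verbatim for every measurable $f\geq 0$, with $Tf$ taking values in $[0,\infty]$.

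For context, the paper's noncommutative substitute (Theorem \ref{ncschur} and Corollary \ref{cb=b}) cannot use your pointwise H\"older step, because there is no kernel. Instead it encodes $A$ and $B$ through weights $u,v$. It then obtains an $\M\to\N$ bound and an $L_1\to L_1$ bound for the reweighted maps $S_0,S_1$; these are the analogues of your conditions $\int_Y A\,dy\leq\alpha$ and $\int_X B\,dx\leq\beta$. The $L_p$ bound is recovered by complex interpolation along $z\mapsto v^{-\theta(z)}T\big(u^{\theta(z)}\,\cdot\,u^{\theta(z)}\big)v^{-\theta(z)}$. Your H\"older step is the commutative, pointwise shortcut for that same interpolation.
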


Actually this is a characterization, see \cite{Gra} Appendix I, in the following sense 

\begin{prop}
  With the notation above, assume that $\gamma>0$ and there is
  $h\in L_\infty(Y)^+$ with finite support such that $T(h)>0$ a.e.,
  then the following are equivalent
  \begin{enumerate}
  \item $T$ maps $L_p(Y)$ into $L_p(X)$ with norm at most $\gamma$.
  \item\label{eq2}  for all $C>\gamma$ there are measurable functions $u:Y\to (0,\infty)$, $v:X\to (0,\infty)$ such that
    $$T(u^{p'})\leq  Cv^{p'} \; a.e., \qquad T^*(v^{p})\leq C  u^{p}\; a.e..$$     \end{enumerate}
\end{prop}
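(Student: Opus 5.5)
The plan is to prove (2)$\Rightarrow$(1) by the Schur test (Proposition~\ref{schurtest}) and (1)$\Rightarrow$(2) by a fixed point argument on a nonlinear map built from $T$ and $T^*$.

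\textbf{(2)$\Rightarrow$(1).} Given $u,v$, I rescale them so that both inequalities carry the same constant $C$; this costs nothing, since the statement already has them that way. I then set
$$A(x,y)=K(x,y)\,\frac{u(y)^{p'}}{v(x)^{p'}},\qquad B(x,y)=K(x,y)\,\frac{v(x)^{p}}{u(y)^{p}}.$$
A direct check gives
$$A^{1-1/p}B^{1/p}=K\,u^{p'/p'}v^{-1}\,v\,u^{-1}=K,$$
using $p'(1-1/p)=1$. The two hypotheses of~(\ref{eq2}) are exactly $\int_Y A(x,y)\,dy\leq C$ and $\int_X B(x,y)\,dx\leq C$. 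Proposition~\ref{schurtest} then gives $\|T\|\leq C$. Letting $C\downarrow\gamma$ yields (1).

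\textbf{(1)$\Rightarrow$(2).} This is the real content. I first reduce to a finite-measure situation by restricting to $\sigma$-finite exhausting pieces where everything is integrable, and recover the general case by a limiting or gluing argument. I then look for $u,v$ as a fixed point of the nonlinear map
$$f\mapsto \Big(T^*\big((Tf)^{p-1}\big)\Big)^{p'-1},$$
acting on positive functions in $L_p(Y)$. Formally, if $f=u^{p'}$ satisfies
$$T^*\big((Tf)^{p-1}\big)=\lambda f^{p-1},$$
then with $v^{p'}:=Tf/\lambda'$ both inequalities hold with $C$ close to $\|T\|$. This is the Euler--Lagrange equation for maximizing $\|Tf\|_p$ over the positive unit sphere.

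Rather than solving the equation exactly, I would use the standard trick (as in Gagliardo/Schur converse proofs, cf.\ \cite{Gra}): take $C>\gamma$ and define
$$u^{p'}=\sum_{k\geq0}\Big(\frac{\gamma}{C}\Big)^{k}S^k(h_0),\qquad S(f)=C^{-1}\,T^*\Big(\big(T f\big)^{p-1}\Big)^{1/(p-1)}\ \text{(suitably normalized)},$$
starting from $h_0$ built from $h$ plus a strictly positive integrable function. Convergence of the series in $L_1$ or $L_p$ comes from the norm bound $\|T\|\leq\gamma$, which gives $\|S\|$-type homogeneous estimates. Positivity everywhere comes from $T(h)>0$ together with the strictly positive summand. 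One then verifies $T(u^{p'})\leq Cv^{p'}$ and $T^*(v^p)\leq Cu^p$, using monotonicity of $T$ and the concavity or convexity of $t\mapsto t^{p-1}$ to compare $S$ of the sum with the shifted sum.

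\textbf{Expected obstacle.} Since the nonlinearity is not additive, the telescoping step fails when $p\neq2$. There I would instead use the linearized form: fix $v^{p}=(Tg)^{p-1}$ for a near-extremal $g$, and run the Neumann series for the linear positive operator $f\mapsto v^{-p'}\ldots$. If that fails, I would fall back on a minimax (Ky Fan / Hahn--Banach) argument on the convex set of pairs $(\log u,\log v)$, using the convexity of $(a,b)\mapsto a^{1/p'}b^{1/p}$.
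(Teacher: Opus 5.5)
\textbf{Overall.} Your direction (2)$\Rightarrow$(1) is correct. It is exactly the decomposition the paper records right after the statement: with $A=Ku^{p'}/v^{p'}$ and $B=Kv^p/u^p$, the two inequalities in (2) are the row and column bounds of Proposition~\ref{schurtest}. The paper does not prove (1)$\Rightarrow$(2) itself; it refers to \cite{Gra}, Appendix I. Your sketch of that direction has a genuine gap at its one essential step, so the hard direction is not established.

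\textbf{The gap.} Put $S(f)=\big(T^*((Tf)^{p-1})\big)^{1/(p-1)}$ and $u^{p'}=\sum_k c_kS^k(f_0)$. To verify $T^*(v^p)\le Cu^p$ you need $S\big(\sum_k c_kS^kf_0\big)\le\sum_k c_kS^{k+1}f_0$, that is, subadditivity of $S$ on positive functions.
\begin{itemize}
\item Pointwise convexity or concavity of $t\mapsto t^{p-1}$ does not give this. For $p\ge 2$ the step $(a+b)^{p-1}$ versus $a^{p-1}+b^{p-1}$ already goes the wrong way. For $p<2$ that step is fine, but taking the $1/(p-1)$-th root then reverses it.
\item The right observation is that $S(f)(y)=\|Tf\|_{L_{p-1}(X,K(\cdot,y)\,dx)}$. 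Minkowski's inequality therefore gives countable subadditivity exactly when $p\ge2$.
\item For $1<p<2$, reverse Minkowski makes $S$ superadditive, and the comparison genuinely fails. So the obstruction is not at ``$p\neq2$''.
\item Your fallbacks (an unspecified linearization, a minimax argument) are named but not carried out.
\item Your normalization is also off. $S$ scales like $\|T\|^{p'}$, so the weights must be $C^{-kp'}$, not $(\gamma/C)^k$ with a factor $C^{-1}$ inside $S$.
\end{itemize}

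\textbf{Closing it for $p\ge2$.} Take $f_0=h+g$ with $g\in L_p(Y)$, $g>0$ a.e., and put
$$f=\sum_{k\ge0}C^{-kp'}S^k(f_0),\qquad u=f^{1/p'},\qquad v=\big(C^{-1}Tf\big)^{1/p'}.$$
Since $\|S(f)\|_p\le\gamma^{p'}\|f\|_p$, the series converges in $L_p(Y)$. Subadditivity gives $S(f)\le C^{p'}f$, hence
$$T^*(v^p)=C^{1-p}S(f)^{p-1}\le Cf^{p-1}=Cu^p.$$
Meanwhile $T(u^{p'})=Cv^{p'}$ holds by definition. Finally $u,v>0$ a.e. because $f\ge g$ and $Tf\ge Th$.

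\textbf{Closing it for $1<p<2$.} Condition (2) is invariant under $T\leftrightarrow T^*$, $p\leftrightarrow p'$, $u\leftrightarrow v$. So run the same construction for $T^*:L_{p'}(X)\to L_{p'}(Y)$, where now $p'>2$. On the set where the resulting $T^*(v^p)$ vanishes, the kernel is zero because $v>0$ a.e. You may therefore set $u=1$ there without changing $T(u^{p'})$.

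\textbf{Unnecessary reduction.} Your preliminary reduction to finite-measure pieces followed by ``gluing'' is not needed. Gluing would not obviously preserve inequalities that couple all the pieces through $K$. The series argument works directly on $\sigma$-finite spaces.
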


Note that if  \eqref{eq2} is satisfied, then one can decompose
$$K(x,y)= \left(K(x,y)^{\frac 1 {p'}}\frac {u(y)}{v(x)}\right).\left(K(x,y)^{\frac 1 {p}}\frac {v(x)}{u(y)}\right):=A^{\frac 1 {p'}}.B^{\frac 1 {p}}$$
where $A$ and $B$ satisfies the hypotheses of Proposition \ref{schurtest} with $\alpha=\beta=C$.

\smallskip

We now give a non commutative version. For technical reasons, we
will stick to finite von Neumann algebras. From now on, we assume that
$T:L_p(\M)\to L_p(\N)$ is a bounded positive operator between two
finite von Neumann algebras $(\M,\tau_\M)$ and $(\N,\tau_\N)$.

We use the notation $\|T\|_+= \sup_{h\in L_p(\M)^+, \|h\|_p\leq 1} \|T(h)\|$. We recall that
the order using the Jordan decomposition yields that
\begin{equation}\label{possa}
  \|T\|_+= \sup_{h\in L_p(\M)^+, \|h\|_p\leq 1} \|T(h)\|=\sup_{h\in L_p(\M)^{s.a}, \|h\|_p\leq 1} \|T(h)\|.
  \end{equation}

\begin{lemma}\label{pertur}
  Let $\M,\,\N$ and $T$ as above with $\|T\|_+\geq 1$. Then for all $\eps\in(0,1)$, there are sequences
  $x_n\in B_{L_p(\M)^+}$ and $y_n\in B_{L_p(\N)^+}$, such that if
  $U(x)=\eps \sum_{k\geq 0} 2^{-k}
  \tau_\M(x_k^{p-1}x) y_k$, then there is $h\in L_p(\M)^+$, $\|h\|_p=1$ such that
  $$\| T+U \|_+=\|(T+U)(h)\|_p.$$
\end{lemma}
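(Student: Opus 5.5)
The plan is a Bishop--Phelps type perturbation argument, in the spirit of Bourgain's rank-one perturbation lemma. It relies on two features of $L_p(\M)$, $1<p<\infty$: it is uniformly convex, and for $x\in L_p(\M)^+$ with $\|x\|_p=1$ the functional $\phi_x=\tau_\M(x^{p-1}\,\cdot\,)$ is its norming functional, i.e. $\phi_x(x)=1$ and $\|\phi_x\|_{p'}=1$.

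We build $x_k,y_k$ and operators $S_0=T$, $S_{k+1}=S_k+\eps 2^{-k}\phi_{x_k}(\cdot)\,y_k$ inductively, together with a fast-decreasing sequence $\delta_k>0$. At stage $k$ we pick $h_k\in L_p(\M)^+$ with $\|h_k\|_p=1$ and $\|S_kh_k\|_p\geq \|S_k\|_+-\delta_k$; this is possible by \eqref{possa}. We then set
$$x_k=h_k,\qquad y_k=\frac{S_kh_k}{\|S_kh_k\|_p}.$$
Since $S_k$ is a positive map plus positive rank-one terms, $y_k\geq 0$, and $\|S_kh_k\|_p\geq \|T\|_+-\delta_0>0$ because $\|T\|_+\geq 1$. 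The operator $T+U$ is then the norm limit of the $S_k$, so $\|S_k-(T+U)\|_+\leq 2\eps 2^{-k}$.

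The key quantitative step is the following. Because $y_k$ is a positive multiple of $S_kh_k$ and $\phi_{x_k}(h_k)=1$,
$$\|S_{k+1}\|_+\geq \|S_{k+1}h_k\|_p=\|S_kh_k\|_p+\eps2^{-k}\geq \|S_k\|_+-\delta_k+\eps 2^{-k}.$$
For $j>k$, we expand $S_j=S_k+\eps\sum_{k\le i<j}2^{-i}\phi_{x_i}(\cdot)y_i$ and use the near-maximality of $h_j$ for $S_j$. This should give a lower bound $\phi_{x_k}(h_j)\geq 1-\eta_k$, with $\eta_k\to0$ controlled by $\delta_k2^k/\eps$. The later terms $i>k$ are handled by comparing with the gain already guaranteed at each later stage, not by crude bounding. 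Uniform convexity then converts this into a norm estimate:
$$\Bigl\|\frac{h_k+h_j}{2}\Bigr\|_p\geq \phi_{x_k}\Bigl(\frac{h_k+h_j}2\Bigr)\geq 1-\frac{\eta_k}2,$$
so $\|h_k-h_j\|_p\leq \psi(\eta_k)$, where $\psi$ comes from the modulus of convexity of the noncommutative $L_p$ space. Hence $(h_k)$ is Cauchy in the closed positive cone, and it converges to some $h\geq0$ with $\|h\|_p=1$.

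To conclude, the estimate $\|S_k-(T+U)\|_+\to0$ gives $\|S_k\|_+\to\|T+U\|_+$. Moreover
$$\|(T+U)h\|_p=\lim_k\|S_kh_k\|_p\geq \lim_k\bigl(\|S_k\|_+-\delta_k\bigr)=\|T+U\|_+,$$
by continuity of the uniformly convergent operators and of $h_k\to h$. So the norm on the positive cone is attained at $h$.

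The main obstacle is the bookkeeping in the step giving $\phi_{x_k}(h_j)\geq 1-\eta_k$ uniformly in $j>k$. The perturbations added after stage $k$ have total size $\eps 2^{-k}$, which is of the same order as the gain at stage $k$. I would first try to telescope the inequalities $\|S_{i+1}\|_+\geq\|S_i\|_+-\delta_i+\eps2^{-i}$ for $k\le i<j$ and compare them with the upper bound
$$\|S_jh_j\|_p\leq \|S_k\|_+ +\eps\sum_{k\le i<j}2^{-i}\phi_{x_i}(h_j),$$
choosing the $\delta_i$ summable and much smaller than $\eps 2^{-i}$. The point is that each factor $\phi_{x_i}(h_j)$ is at most $1$, so any deficit of $\phi_{x_k}(h_j)$ below $1$ is not compensated by the later terms. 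This forces the deficit to be small, of order $\sum_{i\geq k}\delta_i2^{k}/\eps$.
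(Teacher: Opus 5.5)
Your proposal is correct. The construction is exactly the paper's: the rank-one perturbations $\eps 2^{-k}\tau_\M(x_k^{p-1}\cdot)\,y_k$, the near-maximizers $x_k$, $y_k=S_kx_k/\|S_kx_k\|_p$, and the gain $\|S_{k+1}x_k\|_p=\|S_kx_k\|_p+\eps 2^{-k}$. The paper takes $\delta_k=\eps 4^{-k}$.

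The two arguments differ only in the Cauchy step.

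\textbf{The paper's route.} It compares consecutive iterates only. It combines the gain at stage $n$ with $\|T_{n+1}(x_{n+1})\|_p\le\|T_n\|_++\eps 2^{-n}\tau_\M(x_n^{p-1}x_{n+1})$ to get $\tau_\M(x_n^{p-1}x_{n+1})\ge 1-2^{-(n-1)}$. It then invokes a power-type form of uniform convexity, $\|x-y\|_p\le C\delta^\theta$, so that the increments $C2^{-(n-1)\theta}$ are summable.

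\textbf{Your route.} The bookkeeping you flagged does close. From
$$\|S_j\|_+\ge\|S_k\|_++\sum_{i=k}^{j-1}(\eps 2^{-i}-\delta_i)$$
and
$$\|S_j\|_+-\delta_j\le\|S_jh_j\|_p\le\|S_k\|_++\eps\sum_{i=k}^{j-1}2^{-i}\varphi_{x_i}(h_j)$$
you get
$$\eps\sum_{i=k}^{j-1}2^{-i}\bigl(1-\varphi_{x_i}(h_j)\bigr)\le\sum_{i=k}^{j}\delta_i .$$
Every summand on the left is nonnegative, because $0\le\varphi_{x_i}(h_j)\le 1$: the trace of a product of two positive elements is nonnegative, and H\"older gives the upper bound. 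Keeping only the $i=k$ term gives $1-\varphi_{x_k}(h_j)\le 2^k\eps^{-1}\sum_{i\ge k}\delta_i$ uniformly in $j>k$. This is $\tfrac43 2^{-k}$ with the paper's $\delta_i$.

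\textbf{What each buys.} Your uniform control of $h_k$ against all later iterates means you only need qualitative uniform convexity of $L_p(\M)$. The paper's consecutive comparison is shorter but needs the quantitative H\"older-type modulus to sum the increments. That modulus is true for noncommutative $L_p$, but it is an extra input the paper uses without proof.

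Your final limiting step, via operator-norm convergence $S_k\to T+U$ and $\|S_k\|_+\to\|T+U\|_+$, matches the paper's.
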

\begin{proof}
  We fix $0<\eps<1$ and we can assume $\|T\|_+=1$ by homogeneity.

  We construct by induction the sequence $(x_n)_{n\geq 0}$,
  $y_n=\frac {T_n(x_n)}{\|T_n(x_n)\|_p}$ 
  and  $$T_n(.)=T+\eps \sum_{k= 0}^{n-1} 2^{-k}   \tau_\M(x_k^{p-1}.) y_k.$$

  So let $T_0=T$ and choose $x_0\in L_p(\M)^+$ with $\|x_0\|_p=1$ and
  $\|T_0(x_0)\|_p\geq \|T_0\|_+- \eps$.  Thus $y_0$
  is well defined as $T_0(x_0)\neq 0$.

  The sequence $(x_k)$ is constructed by induction by choosing for $n\geq 0$, 
  $x_{n+1}\in L_p(\M)^+$ with $\|x_{n+1}\|_p=1$ and
  \begin{equation}\label{defrec}\|T_{n+1}(x_{n+1})\|_p\geq \|T_{n+1}\|_+-\eps 4^{-(n+1)}.\end{equation} Note that $T_n$ are positive with   $\|T_{n+1}\|_+\geq \|T_0\|_+=1$ ensuring the construction of $y_{n+1}$.

  We turn to the existence of $h$. First,  by construction, we have for $n\geq 0$
    $$ \|T_{n+1}\|_+\geq \|T_{n+1}(x_n)\|_p= \|T_n(x_n)\|_p+\eps 2^{-n}\geq \|T_n\|_+-\eps 4^{-n}+\eps 2^{-n}.$$
    And also
    $$\|T_{n+1}\|_+-\eps 4^{-(n+1)}\leq\|T_{n+1}(x_{n+1})\|_p\leq
    \|T_n\|_+ + \eps 2^{-n} \tau_\M(x_n^{p-1}x_{n+1}).$$
    Thus
    $$\|T_{n}\|_+-\eps (4^{-(n+1)}+4^{-n})+\eps 2^{-n} \leq \|T_n\|_+ + \eps 2^{-n} \tau_\M(x_n^{p-1}x_{n+1}).$$
   We get
    $$\eps 2^{-n} \tau_\M(x_n^{p-1}x_{n+1}) \geq \eps(2^{-n}- 2.4^{-n}).$$
    We have proved $\tau_\M(x_n^{p-1}x_{n+1})\geq 1-  2^{-(n-1)}$.
    
    Now we use the uniform convexity of non commutative $L_p$,
    $1<p<\infty$ in the following form. There exist $C>0$ and
    $\theta\in(0,1)$ such that if $x,y\in B_{L_p(\M)^+}$ satisfy
    $\tau(x^{p-1}y)\geq 1-\delta$ then
    $\|x-y\|_p\leq C \delta^\theta$. We deduce that
    $\|x_{n+1}-x_n\|_p\leq C  2^{-(n-1)\theta}$. The sequence
    $(x_n)$ is Cauchy, it goes to some $h\in L_p(\M)^+$ with
    $\|h\|_p=1$.  as $(T_n)$ is also converging to $T+U$ for the norm
    $B(L_p(\M),L_p(\N))$. The triangle inequality also gives that $\lim_n \|T_n\|_+=\|T+U\|_+$ and we can conclude that $$\|(T+U)(h)\|_p=\|T+U\|_+$$ taking limit in \eqref{defrec}.
\end{proof}

\begin{thm}\label{ncschur} Let $(\M, \tau_\M)$ and $(\N, \tau_\N)$ be finite von Neumann algebras,  $1<p<\infty$ and
  $T: L_p(\M)\to L_p(\N)$ a bounded positive map with $\|T\|_+=1$.

  Then for all $t>1$, there exist $u\in L_p(\M)^+$, $v\in L_p(\N)^+$ both with norm 1 and bounded inverse such that
  the maps  defined by
  $$ S_0 : \M\to L_p(\N); x\mapsto v^{-\frac 12} T(u^{\frac 12} x u^{\frac 12})v^{-\frac 12},
  \quad  S_1^* : \N\to L_{p'}(\M); y\mapsto u^{-\frac {p-1}2} T^*(v^{\frac {p-1}2} y v^{\frac {p-1}2})u^{-\frac {p-1}2}$$
  take values respectively in $\N$ and $\M$ and are positive with $\|S_0(1)\|,\|S_1^*(1)\|\leq t$ and are normal.

  If $T$ is completely positive, then $S_0$ and $S_1^*$ are also completely positive.
\end{thm}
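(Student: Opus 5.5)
Perturb $T$ so that its positive norm is attained, and read the attaining vector off the first-order (Lagrange) condition. Writing a nonnegative $u$ as $u^{1/2}xu^{1/2}$ with $x\le 1$ then turns that condition into the two order inequalities needed.

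\textbf{Step 1: perturbation.} Fix $t>1$ and choose $\eps>0$ small in terms of $t$. Apply Lemma \ref{pertur} to get $U$ and $h\in L_p(\M)^+$ with $\|h\|_p=1$ and $\|T+U\|_+=\|(T+U)(h)\|_p=:\gamma$. Here $1\le\gamma\le 1+\eps$.

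To get invertibility, also add a small strictly positive rank-one piece, for instance $\eps\,\tau_\M(\cdot)\,1$ suitably normalized, which is allowed since the algebras are finite. Then $T' = T+U+\eps'\tau(\cdot)1$ is still positive, and $T'(x)\geq \eps'\tau(x)1$. Its positive norm should still be attained by a vector $h$; if Lemma \ref{pertur} has to be rerun on $T+\eps'\tau(\cdot)1$ to achieve this, that is harmless.

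\textbf{Step 2: variational identities.} Put $g=T'(h)\ge0$ with $\|g\|_p=\gamma$. For any positive $k$, the map $s\mapsto \|T'(h+sk)\|_p^p-\gamma^p\|h+sk\|_p^p$ has a maximum at $s=0$ (and one-sided variations suffice). Differentiating with $\frac{d}{ds}\|a+sb\|_p^p=p\,\tau(a^{p-1}b)$ for $a\geq0$ gives
\[\tau(g^{p-1}T'(k))\le\gamma^p\tau(h^{p-1}k)\]
for all $k\ge0$, that is, $T'^*(g^{p-1})\le \gamma^p h^{p-1}$.

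Now set $u=h$ and $v=g/\gamma$, both of norm 1. Then:
\begin{itemize}
\item $T'(u)=\gamma v$, with $v\geq \eps'\tau(h)/\gamma>0$, so $v$ has bounded inverse.
\item $T'^*(v^{p-1})\le\gamma u^{p-1}$.
\item Symmetrically, adding $\eps'\tau(\cdot)1$ to $T'^*$ forces $u^{p-1}\gtrsim \eps'\tau(g^{p-1})>0$, so $u$ has bounded inverse. To make this rigorous, I would perturb $T$ by the $\eps'$-term before invoking the lemma and check that both lower bounds survive.
\end{itemize}

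\textbf{Step 3: the maps $S_0$ and $S_1^*$.} For $0\le x\le 1$ in $\M$, positivity of $T$ gives
\[0\le T(u^{1/2}xu^{1/2})\le T(u)\le T'(u)=\gamma v.\]
Hence $0\le S_0(x)\le \gamma$. So $S_0$ is positive with values in $\N$ and $\|S_0(1)\|\le\gamma\le t$.

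Similarly, $0\le T^*(v^{(p-1)/2}yv^{(p-1)/2})\le T'^*(v^{p-1})\le\gamma u^{p-1}$, which yields the bound for $S_1^*$.

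Normality follows from positivity together with normality of $T$ on the relevant $L_p$ spaces: a bounded increasing net $x_i\uparrow x$ in $\M$ converges in $L_p$ after conjugating by $u^{1/2}$, because $\tau$ is normal and the algebra is finite. The conjugations by $v^{-1/2}$ are bounded. If $T$ is completely positive, then $S_0$ and $S_1^*$ are compositions of $T$ or $T^*$ with congruences $x\mapsto a^*xa$, hence also completely positive.

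\textbf{Main obstacle.} The hardest step is Step 2. The derivative formula in non commutative $L_p$ must be justified, using differentiability of $a\mapsto\|a\|_p^p$ with gradient $p\,a^{p-1}$, valid for $1<p<\infty$. The perturbation must be arranged so that the attaining vector $h$ exists (Lemma \ref{pertur}) and both $u$ and $v$ are bounded below, while the total error stays within $t$. I would choose $\eps,\eps'$ with $(1+\eps)(1+\eps')\le t$ to control this.
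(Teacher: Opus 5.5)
Your proof is correct and follows essentially the paper's route: add $\eps\,\tau_\M(\cdot)1_\N$ to $T$ before invoking Lemma \ref{pertur} (you rightly end up with this order, which is the paper's $\tilde T$), take the attaining vector $h$, extract a first-order condition, and dominate $S_0$ and $S_1^*$ by $T'(u)=\gamma v$ and $T'^*(v^{p-1})\le \gamma u^{p-1}$. The differences are minor. Your one-sided derivative over the positive cone yields only the inequality, where the paper uses \eqref{possa} and uniqueness of the norming functional to get equality $S^*(S(h)^{p-1})=h^{p-1}$; but the inequality points the right way both for the invertibility of $u$ and for the bound on $S_1^*$. You also obtain normality from monotone nets and $L_p$-continuity of $T$, rather than by building the pre-adjoint $W$ explicitly.
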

\begin{proof}
  Let $T$ as above and $\eps\in(0,1)$, we consider
  $\tilde T= T+ \eps \tau_\M(.)1_{\N}$. This map has the advantage that for all
  $x\in L_p(\M)^+$ non zero, $\tilde T(x)$ is invertible in $\N$ and  for all
  $y\in L_{p'}(\N)^+$ non zero $\tilde T^*(y)$ is invertible in $\M$. Also
  $\|T\|_+\leq \|\tilde T\|_+\leq \|\tilde T\|_+ +\eps \tau_\M(1)\tau_\N(1)$ thus the middle quantity goes to 1 when $\eps\to 0$. 

  We apply Lemma \ref{pertur} to $\tilde T$ and $\eps$. We have $U:L_p(\M)\to L_p(\N)$
  which is positive of norm less than $2\eps$ and $h\in L_p(\M)^+$ with $\|h\|_p=1$ and
  $\|S(h)\|_p= \|S\|_+=1$ where $S=(\tilde T+U)/\|\tilde T+U\|_+$. We also have that
  $S(h)^{-1}\in \N$.

  On the real Banach space $L_p(\M)^{s.a}$, the linear
  functional $\phi: x\mapsto \tau_\N\big(S(h)^{p-1}S(x)\big)$ is
  positive of norm 1 (see \eqref{possa}) that is achieved on $h$.  By uniform convexity, we
  must have $\phi(x)=\tau_\M(h^{p-1}x)$. Thus
  $S^*(S(h)^{p-1})=h^{p-1}$ (here the real or complex adjoint are the
  same because $S$ is self-adjoint preserving).  We set $u=h$ and
  $v=S(h)$, they are both positive of norm 1, because $h^{p-1}=S^*(y)$ for some
  non zero $y\in L_{p'}(\N)$, $h^{-1}\in \M$. 

  Consider the map
  $V_0: \M\to L_p(\N); x\mapsto v^{-\frac 12} S(u^{\frac 12} x
  u^{\frac 12})v^{-\frac 12}$, it is well defined as $v^{-1/2}\in \N$
  and maps positive elements to positive elements.  We have
  $V_0(1_\M)=1_\N$, with positivity, it yields that it actually takes
  values in $\N$ and $\|V_0\|_{\M\to \N}=\|V_0(1)\|=1$.  Similarly,
  $V_1^*: \N\to L_{p'}(\M); y\mapsto u^{-\frac {p-1}2} S^*(v^{\frac
    {p-1}2} y v^{\frac {p-1}2})u^{-\frac {p-1}2}$ is well defined,
  positive with $V_1^*(1_\N)=1_\M$. In particular $V_0$ and $V_1$ have
  norm one.

  Note that for every $x\in \M^+$,
  $0\leq S_0(x)\leq \|\tilde T+U\|_+ V_0(x)$ because
  $0\leq T\leq \tilde T+U$ as positive operators on $L_p(\M)$. One
  concludes that $S_0$ indeed takes values in $\N$ using a Jordan
  decomposition for instance. The last inequality $\|S_0(1)\|\leq \|\tilde T+S\|_+\leq t$ is also clear choosing $\eps$ small enough.

  The arguments for $S_1^*$ are the same. We need to show that $S_1^*$
  is indeed an adjoint. Consider the map
  $W:\M \to L_1(\N); x\mapsto v^{\frac{p-1}2} T(u^{\frac
    {1-p}2}xu^{\frac {1-p}2})v^{\frac{p-1}2}$. It is well defined and continuous by  the H\"older inequality. We need to extend it to 
$L_1$ which reduces to prove that it is continuous for the $L_1$-norm on $\M$. But for 
$x\in\M$ and $y\in \N$, the identity $\tau_\N(W(x)y)=\tau_\M(xS_1^*(y))$ gives $\|W(x)\|_1\leq t \|x\|_1$
because we know that $S_1^*$ is bounded  from $\N$ to $\M$. So $W$ extends to a map $S_1$ whose adjoint is indeed $S_1^*$. The argument to define a pre-adjoint of $S_0$ is the same. 

The completely bounded case is obvious as one just need to check complete positivity of
$S_0:\M\to \N$, $S_1^*:\N\to \M$, this is direct from that of $T:L_p(\M)\to L_p(\N)$
and $T^*:L_{p'}(\N)\to L_{p'}(\M)$ by restriction.
\end{proof}
\begin{rk}
We choose to stick to finite von Neumann algebras to avoid technical problems in the next corollary.  One can extend Theorem \ref{ncschur} to semifinite von Neumann admitting a faithful state using the formulation of \cite{AR} Theorem 5.1.
\end{rk}
The following generalizes a result of Pisier for Schatten classes \cite{Pis95}:
\begin{cor}\label{cb=b}
   $(\M, \tau_\M)$ and $(\N, \tau_\N)$ be semifinite von Neumann algebras, $1\leq p\leq \infty$ and
   $T: L_p(\M)\to L_p(\N)$ be a bounded positive map then  $\|T\|_+=\|T\|$.

   If $T$ is completely positive, then $\|T\|_{cb}=\|T\|=\|T\|_+$.
\end{cor}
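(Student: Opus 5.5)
The plan is to reduce to finite algebras, factor $T$ through the two maps of Theorem \ref{ncschur}, and interpolate between $p=\infty$ and $p=1$.

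\textbf{Reduction to the finite case.} Assume $1<p<\infty$; the endpoints are treated separately below. For semifinite $\M,\N$ it suffices to bound $\|T(x)\|_p$ for $x$ in a dense set. Take $x=e x e$ with $e$ a $\tau_\M$-finite projection. Test the norm against $y=fyf$ with $f$ a $\tau_\N$-finite projection. Then $\tau_\N(T(x)y)=\tau_\N(fT(exe)f\,y)$, and the compression $x\mapsto fT(exe)f$ is a positive map $L_p(e\M e)\to L_p(f\N f)$ between finite algebras. Its $\|\cdot\|_+$ is at most $\|T\|_+$, since cutting down by $e$ and by $f$ are positive contractions. So it suffices to prove, for finite algebras with $\|T\|_+=1$, that $\|T\|\leq 1$, and in the cp case that $\|T\|_{cb}\leq 1$.

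\textbf{Factorization and interpolation.} Fix $t>1$ and take $u,v,S_0,S_1$ from Theorem \ref{ncschur}, with $S_0:\M\to\N$ and $S_1:L_1(\M)\to L_1(\N)$ positive. Positivity gives $\|S_0\|_{\M\to\N}=\|S_0(1)\|\leq t$. Dually, $\|S_1\|_{L_1\to L_1}=\|S_1^*\|_{\N\to\M}=\|S_1^*(1)\|\leq t$. Consider the analytic family, for $z$ in the strip $0\leq \mathrm{Re}\,z\leq 1$,
$$S_z(x)=v^{-\frac{1-z}{2}+\frac{(p-1)z}{2}}\,T\big(u^{\frac{1-z}2-\frac{(p-1)z}{2}}\,x\,u^{\frac{1-z}2-\frac{(p-1)z}{2}}\big)\,v^{-\frac{1-z}{2}+\frac{(p-1)z}{2}}.$$
At $\mathrm{Re}\,z=0$ this is $S_0$ conjugated by unitaries $u^{is},v^{is}$, so it has norm at most $t$ on $\M\to\N$. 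At $\mathrm{Re}\,z=1$ it is $S_1$ conjugated by unitaries, so it has norm at most $t$ on $L_1\to L_1$. Check the exponents at $z=\theta=1/p$: on $v$ the exponent is $-\frac{1-1/p}{2}+\frac{(p-1)/p}{2}=0$, and similarly on $u$. Hence $S_{1/p}=T$, and Stein interpolation on $[\M,L_1(\M)]_{1/p}=L_p(\M)$ gives $\|T\|\leq t$. Letting $t\to1$ gives $\|T\|\leq 1=\|T\|_+$; the reverse inequality is trivial.

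\textbf{Main obstacle.} The hard step is justifying Stein interpolation rigorously. One has to check that $z\mapsto\tau_\N(S_z(x)y)$ is analytic and bounded on the strip for nice $x,y$. This holds because $u,v,u^{-1},v^{-1}$ are bounded, so all complex powers are bounded analytic families in the finite algebras. One could instead write $T(x)$ via the three lines lemma directly: pair $T(x)$ with $y$, put $x=a^{z}$-type analytic families, and apply Hölder at the boundaries.

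\textbf{Endpoints.} For $p=\infty$ and $p=1$ use a direct argument instead. For $p=\infty$, a positive map has $\|T\|=\|T(1)\|=\|T\|_+$ by the Russo–Dye type argument. The case $p=1$ follows by duality from $T^*$ on $\N\to\M$.

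\textbf{The cp case.} Tensor with $M_n$: $T\otimes\mathrm{id}$ is positive on $L_p(\M\otimes M_n)\to L_p(\N\otimes M_n)$, with the finite algebra $\M\otimes M_n$ carrying the trace $\tau\otimes\mathrm{Tr}$. By the first part, $\|T\otimes\mathrm{id}\|=\|T\otimes\mathrm{id}\|_+$. It remains to bound $\|T\otimes\mathrm{id}\|_+$ by $\|T\|$. The cleanest route is to apply the Theorem to $T$ itself, noting that the maps $S_0,S_1^*$ are cp. Then $S_0\otimes\mathrm{id}_{M_n}$ has norm $\|S_0(1)\|\leq t$, and likewise for $S_1\otimes\mathrm{id}$ on $L_1$. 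Running the same interpolation with $u\otimes1$ and $v\otimes1$ gives $\|T\otimes \mathrm{id}_{M_n}\|\leq t$ uniformly in $n$, hence $\|T\|_{cb}\leq\|T\|_+=\|T\|$.
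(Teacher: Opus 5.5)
Your architecture is the paper's: compress to finite corners, apply Theorem~\ref{ncschur}, and interpolate the family with exponent $\theta(z)=\frac12-\frac{pz}2$ on $u$ and $-\theta(z)$ on $v$. This family is $T$ at $z=\frac1p$ and a unitary twist of $S_0$, resp.\ $S_1$, on the boundary lines; the cp case reruns the argument with $M_n$.

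The gap is in the step you single out as the main obstacle. You justify it by saying that $u,v,u^{-1},v^{-1}$ are bounded. Theorem~\ref{ncschur} only gives $u\in L_p(\M)^+$ and $v\in L_p(\N)^+$ with bounded \emph{inverses}, and in general $u$ cannot be bounded. Take a suitably normalized rank one map $T=\tau_\M(a\,\cdot\,)\,b$ with $a\in L_{p'}(\M)^+$ unbounded and $0\neq b\in L_p(\N)^+$. Then the requirement $\|S_1^*(1)\|\leq t$ reads $\tau_\N(bv^{p-1})\,a\leq t\,u^{p-1}$, with $\tau_\N(bv^{p-1})>0$.

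Consequently, where ${\rm Re}\,\theta(z)>0$ the factor $u^{\theta(z)}$ is unbounded and $u^{\theta(z)}xu^{\theta(z)}$ need not lie in $\M$. Where ${\rm Re}\,\theta(z)<0$ the factor $v^{-\theta(z)}$ is unbounded. Even on the boundary lines, $s\mapsto u^{is}$ and $s\mapsto v^{is}$ are only strongly, not norm, continuous. So your $S_z$ is not a family of bounded analytic maps, norm continuous up to the boundary, in the sense required by Stein's theorem or by the Calder\'on space of $(\N,L_1(\N))$. The interpolation is therefore not justified as written, and your three-lines alternative rests on the same false premise.

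The missing device is a spectral truncation, which is exactly what the paper does. Set $r_n=1_{\{u\leq n\}}$ and $q_n=1_{\{v\leq n\}}$. For $x=\mu|x|\in\M$ with $0$ isolated in the spectrum of $|x|$, interpolate
$F(z)=q_nv^{-\theta(z)}T\big(u^{\theta(z)}r_n\mu|x|^{pz}r_nu^{\theta(z)}\big)v^{-\theta(z)}q_n$.

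Since $r_n,q_n$ commute with $u,v$ and $u^{-1},v^{-1}$ are bounded, $r_nu^{\theta(z)}$ and $v^{-\theta(z)}q_n$ are norm continuous on the closed strip and analytic inside. On ${\rm Re}\,z=0$, resp.\ $1$, $F$ is still $S_0$, resp.\ $S_1$, conjugated by contractions, so the bounds by $t$ survive. This yields $\|q_nT(r_nxr_n)q_n\|_p\leq t\|x\|_p$. Letting $n\to\infty$ gives $\|T\|\leq t$, because the algebras are finite, so $r_nxr_n\to x$ and $q_nT(r_nxr_n)q_n\to T(x)$ in $L_p$.

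The same truncation, with $r_n\otimes 1$ and $q_n\otimes 1$, is needed in your cp argument. Your reduction, the endpoint cases, and your boundary norm computations are otherwise fine.
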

\begin{proof} This result is well known if $p=1,\infty$, so we assume $1<p<\infty$. Note that the inequality
  $\|T\|\leq 4\|T\|_+$ is obvious.  Using that $$\|T\|=\sup
  \| fT(e.e)f\|_{L_p(e\M e)\to L_p(f\N f)}$$ where $e$ and $f$ run over finite projections in $\M$ and $\N$,
  and similarly for $\|T\|_+$, one reduces to finite von Neumann algebras. We can assume
  $\|T\|_+=1$ and the algebras are finite.

  The result then follows by complex interpolation, we refer to \cite{BL} and give most of the details. Fix $\eps>0$ and apply Theorem 
  \ref{ncschur} to $T$ and $t=1+\eps$. Let $r_n=1_{u\leq n}$ and $q_n=1_{v\leq n}$.

  As usual, let $\Delta=\{z\in \bC\;|\;  0<{\rm Re }\, z<1\}$. For $z\in \overline \Delta$, let
  $\theta(z)=\frac 12 -\frac {pz} 2$ and define formally for $x\in \M$ with polar decomposition $x=\mu|x|$ and such that 0 is isolated in the spectrum of $|x|$, 
  $$F_x(z) = v^{-\theta(z)}q_n T\big(u^{\theta(z)}r_n \mu |x|^{pz}r_n u^{\theta(z)}\big)v^{-\theta(z)}q_n.$$
  Because of the presence of $q_n$ and $r_n$, the maps $z\mapsto v^{-\theta(z)}q_n, r_n u^{\theta(z)}$ are continuous from $\overline \Delta$ to $\M$ or $\N$ and analytic in $\Delta$.
  Because 0 is isolated in the spectrum of $|x|$, $z\mapsto |x|^{z}$ is also continuous on
  $\overline \Delta$ and analytic in $\Delta$ with values in $\M$.
  As $\M \subset L_p(\M)$ continuously, it follows that $F_x$ is continuous on
  $\overline \Delta$ with values in $L_p(\N)\subset L_1(\N)+\N$ (continuous inclusion) and analytic on $\Delta$.

  We take a look when $z=it$, $t\in \bR$. We have
  $F_x(it)=q_nv^{-\frac{ipt}2}S_0(u^{\frac{ipt}2}r_n\mu |x|^{ipt}r_nu^{\frac{ipt}2})q_nv^{-\frac{ipt}2}$, so $F_x:i\bR\to \N$.
  Again by continuity of $t\mapsto r_nu^{ipt/2}, q_nv^{-ipt/2}$ with values in $\M$ and $\N$,
  $F_x:i\bR\to \N$ is continuous. Moreover $\sup_{t\in \bR}\| F_x(it)\|_\N\leq (1+\eps)$ by continuity of $S_0$.

  When $z=1+it$, $t\in \bR$, we have $F_x(t)=q_nv^{\frac{ipt}2}S_1(u^{-\frac{ipt}2}r_n\mu|x|^{p+ipt}r_nu^{-\frac{ipt}2})q_nv^{\frac{ipt}2}$ (we are exactly in the easy situation of the definition of $W$ in the previous proof).
  Similarly the map $F_x: 1+i\bR\to L_1(\N)$ is continuous and
  $\sup_{t\in \bR}\| F_x(1+it)\|_1\leq (1+\eps)\||x|^p\|_1$ by continuity of $S_1$.

  By complex interpolation $L_p(\N)=(\N,L_1(\N))_{\frac 1p}$, we deduce that $\| F_x(\frac 1p)\|_p\leq (1+\eps) \big(\tau_\M(|x|^p)\big)^{\frac 1p}$. Thus
  $\|q_nT(r_nxr_n)q_n\|_p\leq (1+\eps)\|x\|_p$ and one conclude by taking the limit in $n$ and by density of such $x$ in $L_p$ and because $\eps$ can be arbitrary small.

  The proof for the cb case is the same as $S_0$ and $S_1$ also have completely bounded norms less than $1+\eps$. 
\end{proof}
The following is a direct consequence of $\|T\|=\|T\|_+$.
\begin{cor}\label{monot} Let $S, T :L_p(\M)\to L_p(\N)$ be positive bounded maps with $S\leq T$, then $\|S\|\leq \|T\|$.
  \end{cor}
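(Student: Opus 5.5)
The plan is to reduce the statement to the identity $\|R\|=\|R\|_+$ of Corollary \ref{cb=b}, applied to both $S$ and $T$, and to compare $\|S\|_+$ with $\|T\|_+$ directly on the positive cone. The hypothesis $S\leq T$ is read in the usual sense: $T-S$ is a positive map, so that $0\leq S(h)\leq T(h)$ for every $h\in L_p(\M)^+$.

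The first step is to compare the two maps on a single positive element. Fix $h\in L_p(\M)^+$ with $\|h\|_p\leq 1$. Then $0\leq S(h)\leq T(h)$ in $L_p(\N)$, and the non commutative $L_p$ norm is monotone on the positive cone: if $0\leq a\leq b$ then $\|a\|_p\leq\|b\|_p$. This is standard. One way to see it is to write $a=b^{1/2}cb^{1/2}$ with a contraction $c$; another is to note that $\tau(a^p)\leq\tau(b^p)$, which holds because $t\mapsto t^p$ is increasing and the trace is monotone under operator inequalities for increasing functions (e.g.\ via generalized singular numbers, $\mu_s(a)\leq\mu_s(b)$). Hence $\|S(h)\|_p\leq\|T(h)\|_p\leq\|T\|_+$.

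Taking the supremum over such $h$ gives $\|S\|_+\leq\|T\|_+$. Corollary \ref{cb=b}, applied to the positive maps $S$ and $T$ (the case $p=1,\infty$ being covered there as well), gives $\|S\|=\|S\|_+$ and $\|T\|=\|T\|_+$, and so $\|S\|\leq\|T\|$.

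The only point needing care is the monotonicity of $\|\cdot\|_p$ on positive elements. I would cite the singular value inequality $\mu_s(a)\leq\mu_s(b)$ for $0\leq a\leq b$ from \cite{DPS}, which then integrates to $\|a\|_p\leq\|b\|_p$. Everything else is an immediate consequence of Corollary \ref{cb=b}.
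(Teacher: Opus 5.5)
Your proposal is correct and is essentially the paper's argument: the paper presents this corollary as a direct consequence of $\|T\|=\|T\|_+$ from Corollary \ref{cb=b}. Combined with the monotonicity of $\|\cdot\|_p$ on the positive cone, this gives $\|S\|=\|S\|_+\leq\|T\|_+=\|T\|$, exactly as you wrote.
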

  
We present a variation of corollary \ref{cb=b} that will be helpful to us.  We assume that
$(\M, \tau_\M)$ and $(\N, \tau_\N)$ are semifinite algebras, $1\leq p\leq \infty$ and $(T_t)$
is a strongly continuous family of positive maps
$\bR\to B(L_p(\M), L_p(\N))$ with compact support $K$ (to simplify)
and which is uniformly bounded ($\sup_t \|T_t\|<\infty$). Then one can
define a generalized convolution map (see \cite{Arendt} Section 1.3)
$$K_T : L_p(\bR;L_p(\M))\to L_p(\bR;L_p(\N)), f\mapsto \Big(t\mapsto \int_K T_{u}\big(f(t-u)\big)du\Big).$$

\begin{cor}\label{convo}
  In the above situation 
  $$\|K_T\|_{L_p(\bR;L_p(\M))\to L_p(\bR;L_p(\N))} \leq \Big\|\int_K T_udu\Big\|_{L_p(\M)\to L_p(\N)}.$$
\end{cor}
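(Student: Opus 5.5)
The plan is to realize $K_T$ itself as a positive operator between noncommutative $L_p$-spaces and compare it, via Corollary \ref{monot} (or directly via $\|\cdot\|=\|\cdot\|_+$ from Corollary \ref{cb=b}), with an operator whose norm is visibly $\|\int_K T_u du\|$.

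\textbf{Setup.} We identify $L_p(\bR;L_p(\M))$ with $L_p(L_\infty(\bR)\bar\otimes \M)$, and similarly for $\N$; the tensor product is semifinite, so Corollary \ref{cb=b} applies. Set $S=\int_K T_u\,du$. It is a positive map, as a strong integral of positive maps. The operator $K_T$ is positive, since $f\geq 0$ pointwise gives $T_u(f(t-u))\geq0$ and positivity is preserved under integration.

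\textbf{Reduction by discretization.} First, by strong continuity and compact support, $K_T$ is a norm limit of Riemann sums
$$K_\pi f(t)=\sum_j |I_j|\, T_{u_j}\big(f(t-u_j)\big).$$
So it suffices to show that each such discrete convolution has norm at most $\|\sum_j |I_j|T_{u_j}\|$, and then pass to the limit. Here $\sum_j|I_j|T_{u_j}\to S$ in the strong topology; the norm is lower semicontinuous only in the wrong direction, so we should instead prove the bound directly with $S$, or use norm convergence. The latter holds if $u\mapsto T_u$ is norm continuous; otherwise we argue through the dual pairing as follows.

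\textbf{Key step.} Fix $f\in L_p(\bR;L_p(\M))^+$ and $g\in L_{p'}(\bR;L_{p'}(\N))^+$. Then
$$\langle K_Tf,g\rangle=\int\!\!\int \tau_\N\big(T_u(f(t-u))g(t)\big)\,du\,dt.$$
For fixed $u,t$, Hölder's inequality for the positive map is not enough by itself. Instead we use a Schur-type/Hölder argument for positive maps: for a positive map $R$,
$$\tau(R(a)b)\leq \tau\big(R(a^{p})\,\cdot\big)\ldots$$
This is awkward noncommutatively. A cleaner route is to consider the positive map
$$\Phi:L_p(\ell_p^n(L_p(\M)))\to L_p(\ell_p^n(L_p(\N))),$$
which reduces to showing $\|\sum_j c_j T_{u_j}\otimes \sigma_j\|\leq\|\sum_j c_jT_{u_j}\|$, where the $\sigma_j$ are translations. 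The translations are isometric $*$-automorphisms commuting with everything on the $\M$ side. The plan is to apply Theorem \ref{ncschur} to $R=\sum_j c_jT_{u_j}$, obtaining $u,v$, and then to use $u\otimes 1$, $v\otimes1$ as the weights for $\tilde R=\sum_j c_j T_{u_j}\otimes\sigma_j$ in the interpolation argument of Corollary \ref{cb=b}. The endpoint maps are
$$x\mapsto (v^{-1/2}\otimes1)\,\tilde R\big((u^{1/2}\otimes 1)x(u^{1/2}\otimes1)\big)\,(v^{-1/2}\otimes1),$$
and its analogue for the adjoint. They are positive, and on $1$ they give $\sum_j c_j v^{-1/2}T_{u_j}(u)v^{-1/2}\otimes 1=S_0(1)\otimes 1$, whose norm is at most $t$. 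Likewise the adjoint endpoint gives $S_1^*(1)\otimes1$. The interpolation in the proof of Corollary \ref{cb=b} then yields $\|\tilde R\|\leq t\|R\|$. Letting $t\to1$ and passing from Riemann sums to integrals (directly with the integral in place of a sum, since the same computation works with $\int_K$) gives the claim.

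\textbf{Main obstacle.} The hardest part is justifying that the endpoint maps built from the weights $u\otimes1$, $v\otimes1$ are bounded $\M$-side to $\N$-side. This needs positivity of each term $T_{u_j}$ separately, so that the sum of positive pieces evaluated on $1$ controls the whole map, and it needs the reduction from the semifinite to the finite case exactly as in Corollary \ref{cb=b}.
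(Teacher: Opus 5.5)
Your final plan (apply Theorem \ref{ncschur} to $V=\int_K T_s\,ds$, use weights that are constant in time, bound the two endpoint maps of $K_T$ at $1$ by those of $V$, and interpolate as in Corollary \ref{cb=b}) is exactly the paper's proof, and you should drop the Riemann-sum detour, whose norm-convergence claim is false in general: the discretized convolutions converge to $K_T$ only strongly, not in operator norm. One correction: the weights must lie in $L_p$ of a finite algebra, so you have to restrict to $L_p(I;L_p(\M))\to L_p(J;L_p(\N))$ with $I,J$ bounded intervals and use $\chi_I\otimes u$, $\chi_J\otimes v$; there the endpoint map of $K_T$ satisfies only the inequality $0\leq S_0(1)(t)\leq\int_K v^{-1/2}T_s(u)v^{-1/2}\,ds=V_0(1)$, not your stated equality, and this inequality is exactly where positivity of each $T_s$ is used.
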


\begin{proof}
The result is already known for $p=1,\infty$, so we deal only with $1<p<\infty$.  The proof follows the exact same scheme as for Corollary \ref{cb=b}. One can assume that
  the von Neumann algebras $\M,\,\N$ are finite and restrict to $L_p(I; L_p(\M))\to  L_p(J; L_p(\N))$ for finite intervals $I$ and $J$ to compute the norm.

  Let $V=\int_K T_udu :{L_p(\M)\to L_p(\N)}$ and $\eps>0$, apply
  Theorem \ref{ncschur} to $V$ and $t=1+\eps$ assuming
  $\Big\|\int_K T_udu\Big\|_{L_p(\M)\to L_p(\N)}=1$. There exist
  $u\in L_p(\M)$,  $v\in L_p(\N)$ with norm one and bounded inverses that satisfy the conclusion, we denote by $V_0:\M\to\N$ and $V_1:L_1(\M)\to L_1(\N)$ the resulting maps with norm less than $1+\eps$.  

  Let  $\tilde u\in L_p(I;L_p(\M))=L_p(\A)$ be the function $t\mapsto u$ on $I$ and
  $\tilde v\in L_p(J;L_p(\N))=L_p(\B)$ be the constant function $v$, they have inverse in $\A$ and $\B$.
The algebra $\A$ and $\B$ are finite.
  Now we introduce
   $$ S_0 : \A\to L_p(\B); x\mapsto {\tilde v}^{-\frac 12} K_T({\tilde u}^{\frac 12} x {\tilde u}^{\frac 12}){\tilde v}^{-\frac 12},
   \quad  S_1^* : \B\to L_{p'}(\A); y\mapsto {\tilde u}^{-\frac {p-1}2} K_T^*({\tilde v}^{\frac {p-1}2} y {\tilde v}^{\frac {p-1}2}){\tilde u}^{-\frac {p-1}2}.$$
   They are well defined bounded and positive.

   The unit of $\A$ is the constant function $1_\M$ on $I$.  Because
   of the positivity, we have a.e. for  $t\in J$,
   $$0\leq S_0(1_\A)(t)\leq \int_K {v}^{-\frac {1}2}T_t(u){v}^{-\frac{1}2} dt\leq V_0(1_\M).$$
   In particular, $S_0$ takes values in $\B$ and $\|S_0\|_{\A\to \B}\leq 1+\eps$.
   Similarly $S_1^*:\B\to \A$ has norm less than $1+\eps$ and is the adjoint of a map $S_1:L_1(\A)\to L_1(\B)$ given by  extension of  $x\mapsto \tilde v^{\frac{p-1}2} K_T(\tilde u^{\frac
    {1-p}2}x\tilde u^{\frac {1-p}2})\tilde v^{\frac{p-1}2}$ well defined on $\A$.

   The end of the proof is then by complex interpolation exactly as in Corollary \ref{cb=b} from which we use the
    notation. The element
   $q_n$ and $r_n$ are defined by functional calcul with $1_{[0,n]}$
     from $\tilde v$ and $\tilde u$. Consider $x\in \A$ with 0
     isolated in its spectrum with polar decomposition $x=\mu |x|$, we
     introduce
    $$F_x(z) = \tilde v^{-\theta(z)}q_n K_T\big(\tilde u^{\theta(z)}r_n \mu |x|^{pz}r_n \tilde u^{\theta(z)}\big)\tilde v^{-\theta(z)}q_n.$$
    The rest of the argument to deduce $\|K_T(x)\|_p\leq (1+\eps)\|x\|_p$ is the same using $\|S_0\|_{\A\to \B}\leq 1+\eps$ and   $\|S_1\|_{L_1(\A)\to L_1(\B)}\leq 1+\eps$. One concludes by density of such $x$ in $L_p(I,L_p(\M))$.
  \end{proof}

We leave another variation for the reader, for a completely positive trace
preserving map $U:\A\to \B$, we still denote by $U$ its extension to $L_p$.
\begin{cor}\label{cor2}
  Let $U_i:\A\to \B$, $i=1,...,n$, be completely positive trace preserving maps between
  semifinite von Neumann algebras.
  Let $T_i:L_p(\M) \to L_p(\N)$ be completely positive maps between semifinite von Neumann algebras. If $\sum_{i=1}^n T_i\otimes U_i$ extends to a completely positive map $L_p(\M\overline \otimes \A)\to L_p(\N\overline \otimes \B)$, then
  $$\big\| \sum_{i=1}^n T_i\otimes U_i\big\|_{cb(L_p(\M\overline \otimes \A), L_p(\N\overline \otimes \B))}\leq\big \| \sum_{i=1}^n T_i\big\|_{L_p(\M)\to L_p(\N)}.$$
\end{cor}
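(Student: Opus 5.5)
Set $\Phi=\sum_{i=1}^n T_i\otimes U_i$ and $V=\sum_{i=1}^n T_i$. By homogeneity we may assume $\|V\|=1$. The plan copies the proof of Corollary \ref{convo}: there the convolution $K_T$ plays the role of $\Phi$ and $\int_K T_u\,du$ plays the role of $V$. The trace preserving $U_i$ will play the role of translations, since they satisfy $U_i(1)\leq 1$ and $U_i^*(1)=1$.

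\textbf{Reduction.} Since $\Phi$ is completely positive, Corollary \ref{cb=b} gives $\|\Phi\|_{cb}=\|\Phi\|$. So it suffices to bound the norm of $\Phi$. As in Corollary \ref{cb=b}, we compress by finite projections $e\otimes e'$ and $f\otimes f'$. Compressing $\M,\N$ keeps the hypotheses on the $T_i$ and does not increase $\|V\|$. Compressing $\A,\B$ needs more care. Because the $U_i$ are positive, $f'U_i(e'\cdot e')f'$ is completely positive. It still satisfies $f'U_i(e')f'\leq 1$, and its adjoint satisfies $e'U_i^*(f')e'\leq e'$. These two inequalities, $U_i(1)\leq 1$ and $U_i^*(1)\leq 1$, are the only properties of $U_i$ used below. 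We may therefore assume all algebras are finite.

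\textbf{Schur weights.} Fix $\eps>0$ and apply Theorem \ref{ncschur} to $V$ with $t=1+\eps$. This produces $u,v$ with bounded inverses and positive normal maps $V_0:\M\to\N$ and $V_1^*:\N\to\M$, both of norm at most $1+\eps$. Put $\tilde u=u\otimes 1_\A$ and $\tilde v=v\otimes 1_\B$, and define $S_0$ and $S_1^*$ from $\Phi$ with these weights, exactly as in Corollary \ref{convo}. Because the weights are elementary tensors with $1$ in the second factor,
$$S_0(1)=\sum_i v^{-\frac12}T_i(u)v^{-\frac12}\otimes U_i(1)\leq V_0(1)\otimes 1,$$
using $U_i(1)\leq 1$ and the positivity of each $v^{-\frac12}T_i(u)v^{-\frac12}$. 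Similarly $S_1^*(1)\leq V_1^*(1)\otimes 1$, using $U_i^*(1)\leq 1$. Positivity then gives $\|S_0\|_\infty\leq 1+\eps$ and $\|S_1^*\|_\infty\leq 1+\eps$. At the $cb$ level the same holds, because these maps are completely positive.

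\textbf{Interpolation.} The interpolation argument is word for word that of Corollary \ref{cb=b}. We use the analytic family $F_x(z)$ built from $\tilde u^{\theta(z)}$ and $\tilde v^{-\theta(z)}$, truncated by the spectral projections $q_n,r_n$. We get $\|S_0\|_{\infty\to\infty}\leq 1+\eps$ on the boundary line ${\rm Re}\,z=0$, and $\|S_1\|_{1\to1}\leq 1+\eps$ on ${\rm Re}\,z=1$. This yields $\|\Phi(x)\|_p\leq(1+\eps)\|x\|_p$ on a dense set, and letting $\eps\to0$ finishes the proof.

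\textbf{Main obstacle.} The step needing care is making sense of $\Phi$ and $S_1^*$ as genuine operators. The hypothesis only says that $\Phi$ extends to a completely positive map on $L_p$. We must check that the formula $\Phi^*=\sum_i T_i^*\otimes U_i^*$ holds on elementary tensors. We must also check that $S_1^*$ is the adjoint of an $L_1$-bounded map; this is handled by the duality argument with $W$ from the proof of Theorem \ref{ncschur}. Finally, the positivity of each summand $T_i(u)\otimes U_i(1)$ is what legitimizes the order estimate in the Schur weights step, so that estimate does not rely on positivity of $\Phi$ alone.
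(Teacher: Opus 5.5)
Your argument is correct and is exactly the variation of the proof of Corollary~\ref{convo} that the paper leaves to the reader. You apply Theorem~\ref{ncschur} to $\sum_i T_i$, use the weights $u\otimes 1_\A$ and $v\otimes 1_\B$, and dominate $S_0(1)$ and $S_1^*(1)$ by $V_0(1)\otimes 1$ and $V_1^*(1)\otimes 1$, using positivity of each $T_i$ together with $U_i(1)\leq 1$ and $U_i^*(1)\leq 1$; your check that these two inequalities survive compression by finite projections is a good point.

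One caveat: $U_i(1)\leq 1$ does not follow from trace preservation alone. For instance, the completely positive trace preserving map $(a,b)\mapsto a+b$ from $\ell_\infty^2$ (with counting trace) to $\bC$ has $L_p$-norm $2^{1-1/p}>1$, which contradicts the statement with $T_1=\mathrm{id}_{\bC}$. So you are, correctly, relying on the intended reading that the $U_i$ are unital (or at least subunital) Markov maps, and that assumption should be stated explicitly rather than derived from trace preservation.
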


When $\A$ and $\B$ are commutative, then one also have that if $T_i$'s are positive bounded then 
$$\big\| \sum_{i=1}^n T_i\otimes U_i\big\|_{L_p(\M\overline \otimes \A) \to L_p(\N\overline \otimes \B)}\leq\big \| \sum_{i=1}^n T_i\big\|_{L_p(\M)\to L_p(\N)}.$$

\section{Estimation of positive Kreiss bounded semigroups}
We start by recalling a basic inequality; it can be found in \cite{DS} Theorem 5.3 with $\Phi(t)=t^p$, $p\geq 1$ (see also Corollary 4.5.3 from \cite{Hiai} when $\M=B(H)$):
\begin{lemma} \label{lpsum}
Let $N>0$, if $(x_i)_{i\in \{0,1,...,N\}} \in L_p(\M)^+$, where $(\M,\tau)$ is semifinite,  then
$$\Big\|\sum_{k=0}^N  x_i\Big\|_p \geq \left( \sum_{k=0}^N \| x_i\|^p_p\right)^{\frac{1}{p}}.$$
\end{lemma}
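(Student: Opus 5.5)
We have to prove that for positive $x_0,\dots,x_N\in L_p(\M)$, $\|\sum_k x_k\|_p^p\geq \sum_k\|x_k\|_p^p$, i.e. superadditivity of $x\mapsto\tau(x^p)$ on the positive cone for $p\geq 1$. The natural plan is to reduce by induction to two summands: if $\tau((a+b)^p)\geq \tau(a^p)+\tau(b^p)$ for all $a,b\in L_p(\M)^+$, then applying it with $a=x_0+\dots+x_{N-1}$ and $b=x_N$ and using the induction hypothesis gives the general case. So everything rests on the two-term inequality.

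For two terms, write $s=a+b$. When $p\geq 1$ the function $t\mapsto t^{p-1}$ is operator monotone only for $p-1\in[0,1]$, so I would avoid operator monotonicity. Instead I would use the trace inequality $\tau(s^{p-1}a)\geq \tau(a^{p})$ whenever $0\leq a\leq s$. Granting it, we get
$$\tau(s^p)=\tau(s^{p-1}a)+\tau(s^{p-1}b)\geq \tau(a^p)+\tau(b^p).$$
To justify $\tau(s^{p-1}a)\geq\tau(a^p)$ for $0\leq a\leq s$, I would use that $t\mapsto t^{p-1}$ is monotone increasing on $[0,\infty)$, so $\tau(f(a))\leq\tau(f(s))$ for increasing $f$ (a standard consequence of comparing generalized singular numbers, $\mu_t(a)\leq\mu_t(s)$). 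That alone is not enough, however. A cleaner route is the convexity/Jensen-type trace inequality of Brown--Kosaki: for convex $\Phi$ with $\Phi(0)=0$, and contractions $c$, one has $\tau(\Phi(c^*xc))\leq\tau(c^*\Phi(x)c)$. Write $a=s^{1/2}cc^*s^{1/2}$ with $c$ a contraction (possible since $a\leq s$, via Douglas' lemma, $a^{1/2}=c^*s^{1/2}$ up to adjoint), and similarly for $b$ with $d$, so that $cc^*+dd^*\leq 1$ on the support of $s$. Then $a^p$ and $(c^*sc)^p$ have the same trace, since $xx^*$ and $x^*x$ do for $x=s^{1/2}c$. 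Hence
$$\tau(a^p)+\tau(b^p)=\tau((c^*sc)^p)+\tau((d^*sd)^p)\leq\tau(c^*s^pc)+\tau(d^*s^pd)=\tau(s^p(cc^*+dd^*))\leq\tau(s^p).$$

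The main obstacle is the Jensen trace inequality for unbounded $\tau$-measurable operators. I would first prove everything for bounded $x_i$ in a finite (or $\tau$-finite support) setting, where the Hansen--Pedersen/Brown--Kosaki inequality applies directly. I would then pass to general $L_p$ elements by truncation, $x_i\mapsto x_ie_n$-type cutoffs via spectral projections, together with norm continuity of both sides in $L_p$. This is exactly the content of \cite{DS} Theorem 5.3 with $\Phi(t)=t^p$, so in the paper I would simply cite it and sketch the reduction above.
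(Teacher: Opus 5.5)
Your proof is correct, and it takes a different route from the paper, which gives no argument of its own. The paper simply cites \cite{DS} Theorem 5.3 with $\Phi(t)=t^p$ (and \cite{Hiai} Corollary 4.5.3 when $\M=B(H)$). That result is a Bourin--Uchiyama type inequality for convex $\Phi$ with $\Phi(0)=0$ applied to sums of positive measurable operators. It applies directly in $L_p(\M)$, with no truncation, and yields more than the trace inequality.

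You instead derive exactly the trace inequality that is needed, from three ingredients:
\begin{itemize}
\item the Brown--Kosaki Jensen inequality $\tau\big(\Phi(c^*sc)\big)\leq\tau\big(c^*\Phi(s)c\big)$;
\item a Douglas factorization of $a\leq s$ and $b\leq s$;
\item the identity $\tau\big((xx^*)^p\big)=\tau\big((x^*x)^p\big)$.
\end{itemize}
This is transparent and short. It still rests on a nontrivial theorem (Brown--Kosaki) and needs an approximation step.

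Two details should be made precise when you write it out.

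First, choose the Douglas contractions vanishing off the support projection $e$ of $s$, i.e.\ $c^*=c^*e$ and $d^*=d^*e$. Then $s^{1/2}(e-cc^*-dd^*)s^{1/2}=s-a-b=0$, and the range of $s^{1/2}$ is dense in $eH$. Together these give $cc^*+dd^*=e$, so your last inequality $\tau\big(s^p(cc^*+dd^*)\big)\leq\tau(s^p)$ is in fact an equality.

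Second, in the approximation, $x_ie_n$ is not positive for an arbitrary projection $e_n$. Use the spectral truncations $x_i1_{[1/n,n]}(x_i)$ instead. They are bounded and positive, have $\tau$-finite support because $x_i\in L_p(\M)$, and converge to $x_i$ in $L_p$, so both sides of the inequality pass to the limit.

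Finally, you can drop the paragraph about $\tau(s^{p-1}a)\geq\tau(a^p)$, since your final argument never uses it.
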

 
\subsection{Discrete case}

We start by recalling the notions of Kreiss and Cesàro boundednesses in the discrete setting.  


\begin{defi}
 Let $T$ be a bounded operator on a Banach space.
 \begin{enumerate}[(i)]
 \item $T$ is Kreiss bounded (with constant $K_T$) if $\sigma(T)\subset \overline{\mathbb{D}} $ and there exists $K>0$ such that every all $|\lambda|>1$,
$  \|(\lambda I -T)^{-1})\| \leq \frac{K}{|\lambda|-1}$.
The Kreiss constant $K_T$ of $T$ is the smallest possible $K$.
  \item $T$ is Cesàro bounded if there exists $C>0$ such that, for every $N \in \mathbb{N}$, $\Big\| \frac{1}{N+1} \sum_{k=0}^N T^k\big\| \leq C$.
  The Cesàro constant $C_T$ of $T$ is the smallest possible $C$.
 \end{enumerate}
\end{defi}
The following is an extension to non commutative $L_p$ of a classical fact (which also holds for positive operators on a Banach space with a positive normal cone in the sense of \cite{Arendt}):
\begin{prop}\label{kreissces}
Let $T$ be a bounded, positive operator on $L_p(\M)$ with $(\M,\tau)$ semifinite. Then
$T$ is Kreiss bounded iff  $T$ is Cesàro bounded. Moreover, we have $\frac 1 e C_T\leq K_T \leq 4C_T$.
\end{prop}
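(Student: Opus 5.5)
The plan is to use positivity to compare the resolvent with Cesàro means in both directions. Throughout, Corollary \ref{monot} (monotonicity of the norm for positive maps) replaces the pointwise lattice arguments of the commutative case.

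\emph{Step 1: Cesàro bounded implies Kreiss bounded.} The first task is to control the spectrum. Cesàro boundedness gives $\|T^N\|\leq (N+1)C_T+NC_T\lesssim N$, so $r(T)\leq 1$ and $\sigma(T)\subset\overline{\mathbb D}$. For $|\lambda|>1$ the Neumann series $R(\lambda,T)=\sum_{k\geq0}\lambda^{-k-1}T^k$ converges absolutely.

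Setting $r=|\lambda|$, I will dominate $|R(\lambda,T)|$ by the positive operator $R(r,T)=\sum_k r^{-k-1}T^k$. In the noncommutative setting I cannot take absolute values pointwise, so instead I will use that, for $h\geq0$, $\|\sum_k \lambda^{-k-1}T^k h\|\leq \sum$ of real and imaginary parts. More cleanly, write $R(\lambda,T)$ as a combination of at most four positive operators, each dominated by $R(r,T)$. Splitting $\lambda^{-k-1}$ into the positive and negative parts of its real and imaginary parts gives four positive series, each $\leq R(r,T)$. Combining this with \eqref{possa} and Corollary \ref{cb=b} gives $\|R(\lambda,T)\|\leq 4\|R(r,T)\|$.

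Then Abel summation gives
$$R(r,T)=\sum_{N\geq0}(r^{-N-1}-r^{-N-2})\sum_{k=0}^N T^k,$$
and hence
$$\|R(r,T)\|\leq C_T\sum_N (N+1)r^{-N-1}(1-r^{-1})=\frac{C_T}{r-1}.$$
This yields $K_T\leq 4C_T$.

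\emph{Step 2: Kreiss bounded implies Cesàro bounded.} Take $r=1+\frac1{N+1}$. The Neumann series of $R(r,T)$ has positive terms, so
$$0\leq \sum_{k=0}^N r^{-k-1}T^k\leq R(r,T)$$
as positive operators on $L_p(\M)$. Since $r^{-k-1}\geq r^{-N-1}\geq e^{-1}$ for $k\leq N$, we get
$$e^{-1}\sum_{k=0}^N T^k\leq R(r,T).$$
Corollary \ref{monot} then gives $e^{-1}\|\sum_{k=0}^N T^k\|\leq \|R(r,T)\|\leq K_T(N+1)$, that is, $C_T\leq eK_T$.

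One point needs care here: the Neumann series must converge at $r$ slightly bigger than $1$. Kreiss boundedness gives $\sigma(T)\subset\overline{\mathbb D}$, so $r(T)\leq1$ and the series converges for $r>1$ to the resolvent. Each partial sum is positive, and the limit is in operator norm, so positivity and the order inequality pass to the limit. Corollary \ref{monot} applies to bounded positive maps with $S\leq T$ on the semifinite algebra.

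\emph{Main obstacle.} The main obstacle is the domination step in Step 1 for complex $\lambda$. I will handle it by the four-part positive decomposition just described, where each part is at most $R(r,T)$ in the positive-operator order, and then by monotonicity. The constant $4$ in the statement matches this.
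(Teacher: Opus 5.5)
Your proof is correct and follows essentially the same route as the paper. For $K_T\leq 4C_T$ you use the Neumann series, Abel summation, and domination of the real and imaginary parts by the positive series $R(|\lambda|,T)$ via Corollary \ref{monot}; for $C_T\leq eK_T$ you compare $(r-1)R(r,T)$ at $r$ close to $1$ with the Cesàro sum. Your explicit splitting into four positive parts, each dominated by $R(|\lambda|,T)$, together with the sharp Abel bound $\|R(r,T)\|\leq C_T/(r-1)$, is a clean way to let Corollary \ref{monot} apply directly and give exactly the constant $4$.
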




\begin{proof}
    Let us prove the indirect implication. We suppose that $T$ is Cesàro bounded. In particular, $\|T^n\|\leq (n+1)C_T$ and the sum $\sum_{k=0}^n z^{-k-1} T^k $ converges for $|z|>1$. Its limit is $R(z,T)$ for $|z|>\|T\|$ and thus the limit is the resolvent of $T$ for all $|z|>1$.
    Fix $0\leq  r<1$,   using an Abel transform
       \begin{align*}
        \Big\|\sum_{k=0}^n r^k T^k \Big\| &= \Big\|r^{n+1 }\sum_{k=0}^n T^k + \sum_{k=0}^n (r^k - r^{k+1}) \sum_{j=0}^k T^j \Big\| \\
        & \leq C_T n r^{n+1} + C_T (1 - r) \sum_{k=0}^n r^k k  \leq  \frac{2C_T}{1-r}.
    \end{align*}
        For every $0\leq r< 1$ and $\theta\in \bR$, we have
    $$   - \sum_{k=0}^n r^k T^k  \leq \sum_{k=0}^n T^k r^k\cos(k\theta)\leq  \sum_{k=0}^n  r^kT^k. $$
    Similarly with $\sin$ instead of $\cos$, with $\frac 1 z = re^{i\theta}$, it follows, by Corollary \ref{monot},
$$  \sup_n \Big\| \sum_{k=0}^n z^{-k-1} T^k\Big\|   \leq \frac{4C_T}{|z|-1}.$$ 
    This inequality proves that $T$ is Kreiss.

    We prove the direct implication. We suppose that $T$ is Kreiss bounded. Then $\sigma(T)\subset \overline{\mathbb{D}}$ and it is well known that $z\mapsto R(z,T)$ is holomorphic on $\cc \setminus \sigma(T)$ and can be written with the formula $\sum_{k=0}^{\infty} z^{-k-1} T^k$ if $z\in \cc \setminus \overline{\mathbb{D}}$. 
    Let $n\geq 1$ and choose $z=(1-\frac{1}{n})^{-1}>1$, we get
    \begin{align*}
    (z-1) \sum_{k=0}^{\infty} z^{-k-1} T^k = \frac{1}{n}\sum_{k=0}^{\infty} \left(1-\frac{1}{n}\right)^{k} T^k
    & \geq  \frac{1}{n}\sum_{k=0}^{n-1} \frac{1}{e} T^k\geq 0.
    \end{align*}
     Since the first term is bounded by $K_T$ in norm so is the latter and $T$ is Cesàro bounded with $C_T\leq e K_T$.
    
\end{proof}
It is stated but not proven in \cite{Arnold}, that the norm of $T^n$ for a positive Kreiss bounded operator $T$, on a commutative $L_p$, grows at most like $\Big(\frac{ n }{\log(n)^{\frac{1}{p}}} \Big)$.  We give an improvement for general non commutative $L_p$.

\begin{thm}\label{estdisc}
 Let $T$ be a positive operator on  $L_p(\M)$, with $(\M,\tau)$ semifinite and $1<p<\infty$. If $T$ is Kreiss bounded, then
$$
\|T^n\| \leq (2C_T+1/2)n^{1-\mu_T},
$$
where $\mu_T = \frac{1}{p} \log_4 \left(1 + \frac{1}{(4C_T)^p} \right)$.
\end{thm}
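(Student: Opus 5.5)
The plan is to turn Cesàro boundedness (Proposition \ref{kreissces}, $C_T\leq eK_T<\infty$) into a multiplicative recursion along $n\mapsto 4n$, and to gain the factor $(1+(4C_T)^{-p})^{-1/p}$ from the superadditivity of Lemma \ref{lpsum}. By Corollary \ref{cb=b} every norm we need (of $T^n$ or of block sums $\sum_{k\in I}T^k$) is attained up to $\eps$ on positive unit vectors $x\in L_p(\M)^+$. By Corollary \ref{monot}, order inequalities between positive combinations of powers of $T$ become norm inequalities. Write $C=C_T$. The Cesàro hypothesis reads $\|\sum_{k=0}^{N-1}T^kx\|_p\leq CN$ for $x\geq 0$, $\|x\|_p=1$.

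\textbf{Key one-step inequality.} Fix $n$ and consider the block sums
$$B_j(x)=\sum_{k=jn}^{(j+1)n-1}T^kx, \qquad j=0,1,2,3.$$
All four are positive, and their sum is the Cesàro sum of length $4n$. Lemma \ref{lpsum} then gives
$$\|B_0(x)\|_p^p+\|B_3(x)\|_p^p\leq \Big\|\sum_{k=0}^{4n-1}T^kx\Big\|_p^p\leq (4nC)^p .$$
I want to use this to show that the quantity $\beta_m=\sup_{x\geq0,\|x\|=1}\|B(x)\|_p/m$, where $B$ is a block of length $m$ starting at $3m$ (the "late" averages), decreases by the factor $4^{-\mu_T}=(1+(4C)^{-p})^{-1/p}$ each time $m$ is multiplied by $4$.

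\textbf{Comparing the blocks.} I will use the identity $B_3(x)=T^{3n}B_0(x)$ together with a lower bound on $\|B_0(x)\|_p$ in terms of $\|B_3(x)\|_p$. Concretely, for $x$ nearly maximizing $\|B_3(x)\|_p$, I want $\|B_0(x)\|_p\gtrsim \|B_3(x)\|_p/(4C)$. To get it, note that summing $T^jB_0(x)$ over $j<4n$ produces a positive element dominating a multiple of $B_3(x)$, and that this sum is controlled by the Cesàro bound applied to $B_0(x)/\|B_0(x)\|_p$. Inserting this into the displayed inequality gives
$$\|B_3(x)\|_p^p\big(1+(4C)^{-p}\big)\leq (4nC)^p.$$
Normalizing and iterating along $n=4^j$ yields $\beta_{4^j}\lesssim C\,4^{-j\mu_T}$.

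\textbf{Back to powers and interpolation.} Finally I pass from blocks to single powers. For $m$ in a late block, averaging gives $n\,T^{m}x\leq$ a positive combination of $T^{m-k}(T^kx)$. Using positivity again, $\|T^m\|$ is bounded by a block average plus an error, which should produce the constant $2C+\tfrac12$. Values of $n$ between powers of $4$ are handled by monotonicity of $m\mapsto m^{1-\mu_T}$, losing only constants absorbed into $2C+1/2$.

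The main obstacle is the comparison step, namely the lower bound on the early block $B_0(x)$ in terms of the late block $B_3(x)$ with the precise constant $4C$. If the direct domination argument fails, I would instead take $x$ nearly maximizing the whole Cesàro sum and apply Lemma \ref{lpsum} to a split into two blocks. This gives a recursion on $c_N=\sup\|N^{-1}\sum_{k<N}T^k\|_+$ for late windows, with the same factor $(1+(4C)^{-p})^{-1/p}$.
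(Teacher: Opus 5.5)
Your one-step inequality is correct. You have $\sum_{j<4n}T^jB_0(x)\geq n\,B_3(x)$, so $\|B_3(x)\|_p\leq 4C\|B_0(x)\|_p$, and Lemma~\ref{lpsum} then gives $(1+(4C)^{-p})\|B_3(x)\|_p^p\leq (4nC)^p$. The gap is in everything after that.

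\textbf{The iteration does not exist.} This inequality improves the constant only once. At scale $4n$ the total sum is again anchored at $0$, still contains the unimproved early powers, and is bounded only by the raw Ces\`aro constant, so nothing feeds back into a recursion. In fact the claimed decay $\beta_{4^j}\lesssim C4^{-j\mu_T}$ is false: for $T=I$ one has $C_T=1$ but $\beta_m=1$ for every $m$.

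\textbf{Block averages are the wrong quantity.} Averages over late blocks are already bounded by $4C$ from the Ces\`aro hypothesis alone, so they carry no information about the theorem. Its whole content is control of a \emph{single} power. Positivity only gives $\|T^mx\|_p\leq\|\sum_{k=m-L+1}^{m}T^kx\|_p$, which is a block \emph{sum}, not an average. A bound of the form ``$\|T^m\|\leq$ block average $+$ bounded error'' would make $T$ power bounded, which Theorem~\ref{examd} rules out. Your fallback via $c_N$ on late windows has the same defect: again $c_N\equiv 1$ for $T=I$.

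\textbf{The missing idea.} Anchor the windows at the target exponent $n$ and iterate a \emph{lower} bound, starting from $T^nx$ itself.
\begin{itemize}
\item For fixed $n$ and $x\geq 0$ with $\|x\|_p=1$, set $u_L=\|\sum_{i=0}^{L-1}T^{n-i}x\|_p^p$.
\item The window $[n-4L+1,n]$ contains the disjoint pieces $[n-L+1,n]$ and $[n-4L+1,n-2L]$.
\item Your own comparison trick applies here: $\sum_{j<4L}T^j$ applied to the early piece dominates $2L$ times the late piece. Hence the late piece has norm at most $4C$ times that of the early piece.
\item Lemma~\ref{lpsum} then gives $u_{4L}\geq (1+(4C)^{-p})\,u_L$.
\item Iterate from $u_1=\|T^nx\|_p^p$ up to $u_{4^k}\leq (C(n+1))^p\leq(2Cn)^p$, where $4^k\leq n<4^{k+1}$.
\end{itemize}
This yields $\|T^nx\|_p^p\leq (1+(4C)^{-p})(2C)^p n^{p(1-\mu_T)}$. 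That is the stated bound, since $((2C)^p+2^{-p})^{1/p}\leq 2C+\frac12$. This is exactly the paper's argument, and no block-to-power step is needed.
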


\begin{proof}
  For $(n,s,m) \in \nn^3$, $n\geq s>m\geq 0$,  we have using positivity
  $$  \Big(\sum_{l=m}^{s-1} T^l\Big)\Big(\sum_{k=m}^{s-1}T^{n-k} \Big)\geq
  \sum_{i=0}^{s-m}  (s-m-i) T^{n-i}.$$

  Recall the equivalence between the Kreiss boundedness and the Cesàro
  boundedness of Proposition \ref{kreissces}. Thus,
  $\|\sum_{l=0}^{s-1}T^l\|\leq C_Ts$. The monotony of $\|.\|_p$ on $L_p(\M)^+$  gives, for $x\in L_p(\M)^+$ with $\|x\|_p=1$,  
\begin{align*}
  \frac {s-m}{2s} \Big\|\sum_{i=0}^{\frac {s-m}2} T^{n-i}(x) \Big\|_p \leq\Big\| \sum_{i=0}^{s-m} \frac {s-m-i}{s}T^{n-i}(x) \Big\|_p \leq C_T \Big\|\sum_{k=m}^{s-1}T^{n-k}(x) \Big\|_p.
\end{align*}

In particular, if $m=2l$ and $s=4l$, the latter inequality reads
\begin{align}\label{aa1}
\Big\|\sum_{i=0}^{l} T^{n-i}(x) \Big\|_p \leq 4C_T \Big\|\sum_{k=2l}^{4l-1}T^{n-k}(x) \Big\|_p.
\end{align}
Let $k\in \mathbb{N}$ be such that $4^k \leq n < 4^{k+1}$ and denote $u_l = \|\sum_{i=0}^{l-1}T^{n-i}\|_p^p$ for $1 \leq l \leq n$. For every $l \in \{1, \ldots, k\}$, we have, by \eqref{aa1} and lemma \ref{lpsum},
\begin{equation}
\begin{aligned}\label{aa2}
u_{4^l} \geq \Big\|\sum_{i=0}^{4^{l-1}-1} T^{n-i}(x) +  \sum_{i=2.4^{l-1}}^{4^{l}-1} T^{n-i}(x) \Big\|_p^p
& \geq \Big\|\sum_{i=0}^{4^{l-1}-1} T^{n-i}(x)\Big\|_p^p +  \Big\| \sum_{i=2.4^{l-1}}^{4^{l}-1} T^{n-i}(x) \Big\|_p^p\\
& \geq \left(1 + \frac{1}{(4C_T)^p}\right) u_{4^{l-1}}.
\end{aligned}
\end{equation}
By successively applying inequality \eqref{aa2}, we get
\begin{align*}
u_{4^k} \geq \left(1 + \frac{1}{(4C_T)^p}\right)^k u_1 = \left(1 + \frac{1}{(4C_T)^p} \right)^k \| T^n(x) \|_p^p = 4^{k \delta_T} \| T^n(x) \|_p^p,
\end{align*}
where $\delta_T := \log_4 \left(1 + \frac{1}{(4C_T)^p} \right)$. On one hand,
$$
u_{4^k} \leq \Big\|\sum_{i=0}^{n} T^{n-i}(x)\Big\|_p^p \leq C_T^p (n+1)^p \leq (2C_T)^p n^p,
$$
and on the other hand, since $4^k \geq \frac{n}{4}$,
$$
4^{k \delta_T} \geq \frac{n^{\delta_T}}{4^{\delta_T}} = \dfrac{n^{\delta_T}}{\left(1 + \frac{1}{(4C_T)^p} \right) }.
$$
It follows that
$$
\| T^n(x) \|_p^p \leq \left(1 + \frac{1}{(4C_T)^p} \right) (2C_T)^p n^{p-\delta_T} = \left( 2^pC_T^p + \frac{1}{2^p} \right) n^{p-\delta_T},
$$
which concludes the proof.

\end{proof}

\begin{rk}
Actually one can replace $\|\sum_{k=a}^b T^{n-k}(x)\|_p$ by $\Big(\sum_{k=a}^b \|T^{n-k}(x)\|_p^p\Big)^{1/p}$. Indeed by Corollary \ref{cor2} applied to $T_i=T^i$ and $S_i=S^i$ where $S$ is the shift on $\ell_\infty(\mathbb Z)$ gives that $T\otimes S$ has the same Kreiss constant as $T$. One just needs to apply the above proof with $T\otimes S$ and $x$ replaced by $x\otimes \delta_0$. 
\end{rk}

\begin{thm}\label{estdischil}
Let $T$ be an operator on a Hilbert space $H$. If $T$ is Kreiss bounded, then there is some $\delta_T>0$ such that $\|T^n\| =\mathcal{O}\big(n^{1-\delta_T}\big)$.
\end{thm}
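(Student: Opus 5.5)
The plan is to rerun the proof of Theorem \ref{estdisc} with the quantities $\|\sum_{i\in I} T^{n-i}(x)\|_p^p$ replaced by the square sums $\sum_{i\in I}\|T^{n-i}x\|^2$. This is the point of view suggested in the Remark following Theorem \ref{estdisc}. Superadditivity over disjoint blocks (the role of Lemma \ref{lpsum}) then becomes trivial. Positivity and Corollary \ref{monot} are no longer available, so the comparison step \eqref{aa1} has to be replaced by a Plancherel argument on $\ell_2(\mathbb Z;H)$.

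Fix a unit vector $x$, fix $n$, and set $y_j=T^{n-j}x$ for $0\le j\le n$. The target is an analogue of \eqref{aa1}: for $4l\le n$,
$$\sum_{i=0}^{l}\|y_i\|^2\;\le\; C\sum_{k=2l}^{4l-1}\|y_k\|^2 .$$
To get it, I will write $y_i=T^{k-i}y_k$ for $i\le l\le 2l\le k$ and average over $k$ with a suitable weight $c_j$ depending only on $j=k-i$:
$$\Big(\sum_j c_j\Big)\,y_i=\sum_k c_{k-i}\,T^{k-i}y_k .$$
For $j=k-i$ to range over $[l,4l)$, i.e.\ so that $k$ stays in $[2l,4l)$ for every $i\le l$, the weights must be arranged accordingly. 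The right-hand side, as a function of $i$, is the action on $(y_k)_k\in\ell_2(\mathbb Z;H)$ of a block Toeplitz operator with operator symbol $\Phi(\theta)=\sum_j c_je^{ij\theta}T^j$. By Plancherel, its norm on $\ell_2(\mathbb Z;H)$ equals $\sup_\theta\|\Phi(\theta)\|$. If the weights can be chosen so that $\Phi(\theta)$ is (up to an admissible error) a combination of resolvents $\lambda R(\lambda,T)$ with $|\lambda|-1\simeq 1/l$, while $\sum_j c_j\simeq l$, then the Kreiss bound gives $\sup_\theta\|\Phi(\theta)\|\lesssim K_T\, l$. Dividing by $\sum_jc_j$ yields the displayed inequality with a constant depending only on $K_T$. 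For the a priori upper bound I will use either the Lubich--Nevanlinna estimate $\|T^j\|=\mathcal O(j)$ or the Plancherel identity
$$\sum_k r^{-2k-2}\|T^kx\|^2=\frac1{2\pi}\int\|R(re^{i\theta},T)x\|^2d\theta\le \frac{K_T^2}{(r-1)^2},$$
with $r=1+1/n$, which gives $\sum_{i\le n}\|y_i\|^2\lesssim K_T^2n^2$.

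With these two ingredients the iteration is exactly as in Theorem \ref{estdisc}. Put $u_l=\sum_{i<l}\|y_i\|^2$. Disjointness of the blocks $[0,4^{l-1})$ and $[2\cdot4^{l-1},4^l)$ gives
$$u_{4^l}\ge (1+C^{-1})\,u_{4^{l-1}},$$
hence $u_{4^k}\ge (1+C^{-1})^k\|T^nx\|^2$. Comparing with $u_{4^k}\lesssim n^2$ gives $\|T^nx\|^2\lesssim n^{2-\delta}$ with $\delta=\log_4(1+C^{-1})$. So $\|T^n\|=\mathcal O(n^{1-\delta_T})$ with $\delta_T=\delta/2$.

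The main obstacle is the weighted averaging step. Sharp indicator weights lead to the symbol $\sum_{j\in[l,4l)}e^{ij\theta}T^j$, a rotated Cesàro sum. Kreiss boundedness does not control this on a Hilbert space without positivity. One therefore needs weights, for instance differences of geometric weights $r^j$ with $r=1-1/l$, whose symbol is a resolvent expression, while keeping their effective support inside $[l,4l)$. The plan is to use a combination such as
$$r_1^{j}-r_2^{j}\quad\text{or}\quad j\,r^{j}\ \ (\text{a derivative of the resolvent}),$$
cut off at $4l$. The tail beyond $4l$ would be handled either by the a priori $\mathcal O(n)$ growth, absorbed because the weights are exponentially small there, or by enlarging the dyadic ratio from $4$ to a larger base. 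If neither works, I would fall back on a squared-resolvent symbol $R(\lambda,T)^2$, bounded via Cauchy--Schwarz on $\ell_2$ using the resolvents of both $T$ and $T^*$ (both Kreiss bounded with constant $K_T$).
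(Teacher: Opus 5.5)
\textbf{Gap: the comparison step is left open, and your proposed way of closing it fails.}

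Your overall architecture is sound. It is the paper's argument in Fourier-dual form: an operator-valued multiplier on $\ell_2(\mathbb Z;H)$ whose symbol is a resolvent, followed by the dyadic iteration of Theorem \ref{estdisc}. However, the comparison inequality, which is the only non-routine step, is not established, and the route you propose for it does not work.

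You insist on weights $c_j$ that are both of resolvent type and supported in a window. Finite combinations of resolvents and their derivatives have exponential-polynomial weights, which never vanish for all large $j$, so you are forced to cut off at $4l$. The cut-off destroys the symbol bound. For $c_j=r^j$ with $r=1-1/l$, the discarded tail has symbol $\sum_{j\ge 4l}r^je^{ij\theta}T^j=(re^{i\theta}T)^{4l}\lambda R(\lambda,T)$ with $|\lambda|=r^{-1}$. The only available estimate for it is $r^{4l}\|T^{4l}\|\,K_T l\approx e^{-4}\|T^{4l}\|K_T l$. Under the a priori bound $\|T^{4l}\|=\mathcal O(l)$ this is of order $l^2$, and it stays superlinear even if you bootstrap $\|T^{4l}\|=\mathcal O(l^{1-\delta})$. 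Beyond $4l$ the weights are smaller only by a constant factor. So neither ``absorption'' nor a larger dyadic base (which merely turns $e^{-4}$ into $e^{-b}$) recovers an $\mathcal O(l)$ symbol bound.

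\textbf{The missing idea: localize the input, not the weights.}

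Call your block Toeplitz operator $\mathcal T_\Phi$. Plancherel only gives $\|\mathcal T_\Phi w\|\le \sup_\theta\|\Phi(\theta)\|\,\|w\|$ with the full norm of $w$. So to have $\sum_{k=2l}^{4l-1}\|y_k\|^2$ on the right, you must feed $w_k=y_k\,1_{[2l,4l)}(k)$ in any case.

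Having done so, keep the untruncated weights: $c_j=r^j$ for $j\ge 0$ and $c_j=0$ for $j<0$, with $r=1-\frac1{4l}$. Then:
\begin{enumerate}
\item The symbol is exactly $\lambda R(\lambda,T)$, of norm at most $\frac{K_T}{1-r}=4lK_T$.
\item Since $T^{k-i}y_k=y_i$ for $k\ge i$, the $i$-th output coordinate for $0\le i\le l$ is $\big(\sum_{k=2l}^{4l-1}r^{k-i}\big)y_i$. Its coefficient is at least $2l\,r^{4l}\ge l/2$.
\item The weights are nonnegative, so these partial sums cannot cancel, and all other output coordinates are simply dropped.
\end{enumerate}
This gives $\sum_{i\le l}\|y_i\|^2\le 64K_T^2\sum_{k=2l}^{4l-1}\|y_k\|^2$. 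It is exactly the paper's $A(z)B(z)$ argument: $A(z)=\sum_k z^kr^kT^k$, $B(z)x$ is the Fourier transform of the truncated block, and the coefficients $C_{n-j}\ge e^{-1}(s-m-j)$ are these same partial sums.

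\textbf{A smaller point on the a priori bound.}

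Only your Plancherel option works here; it is the estimate \eqref{estdisK4} that the paper uses. The Lubich--Nevanlinna bound $\|T^j\|=\mathcal O(j)$ only yields $\sum_{j\le n}\|T^jx\|^2=\mathcal O(n^3)$. Fed into the iteration, that would give $\|T^n\|=\mathcal O(n^{3/2-\delta/2})$, an exponent above $1$.
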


\begin{proof}
For every $z\in \mathbb{T}$, we let $ A(z) :=\sum_{k=0}^{\infty} z^k r^k T^k$ and $B(z) :=\sum_{i=m}^{s-1} z^{n-i} T^{n-i} $ for some fixed integers $n\geq s> m\geq 0$ and $r=1-\frac{1}{s}$. As in the proof of Proposition \ref{kreissces}, the operator $A(z)$ is well defined, satisfies $A(z)= (zr)^{-1} R\left((zr)^{-1},T\right)$, and the Kreiss boundedness of $T$ yields 
\begin{equation}\label{estdisK1}
\|A(z)\| \leq \frac{K_T}{1-r} = sK_T.
\end{equation}
Let $C(z) = A(z)B(z)$ and notice that $\displaystyle{C(z) = \sum_{l=n-s+1}^{\infty} C_l z^l T^l}$ with $\displaystyle{C_l = \sum_{\substack{l=n+k-i \\ m\leq i \leq s-1 \\ k \geq 0}} r^k}$. In particular, when $l=n-j$ where $0 \leq j \leq s-m$, we have
\begin{equation}\label{estdisK2}
C_{n-j}  = \sum_{\substack{i=k+j \\ m\leq i \leq s-1 \\ k \geq 0}} r^k \geq \sum_{\substack{i=k+j \\ m\leq i \leq s-1 \\ m \leq k \leq s-1}} \big(1-1/s \big)^{s-1} \geq e^{-1}(s-m-j).
\end{equation}
Let $x\in H, \|x\|=1$. On one hand, by \eqref{estdisK1} we have
$$
\| z \mapsto A(z)B(z)x \|^2_{L_2(\mathbb{T}, H)} \leq s^2K_T^2 \| z \mapsto B(z)x \|^2_{L_2(\mathbb{T}, H)}= s^2K_T^2 \sum_{i=m}^{s-1} \|T^{n-i} x\|^2.
$$
On the other hand, by \eqref{estdisK2},
\begin{align*}
\| z \mapsto A(z)B(z)x \|^2_{L_2(\mathbb{T}, H)}
& = \| z \mapsto C(z)x \|^2_{L_2(\mathbb{T}, H)} \\
& = \sum_{l=n-s+1}^{\infty} C_l^2\|T^lx\|^2 \\
& \geq e^{-2} \sum_{j=0}^{s-m} (s-m-j)^2 \|T^{n-j} x\|^2.
\end{align*}
It follows that
\begin{equation}\label{estdisK3}
\frac{e^{-2}}{4} \frac{(s-m)^2}{s^2} \sum_{j=0}^{\frac{s-m}{2}}  \|T^{n-j} x\|^2 \leq \frac{e^{-2}}{s^2} \sum_{j=0}^{s-m} (s-m-j)^2 \|T^{n-j} x\|^2 \leq K_T^2 \sum_{i=m}^{s-1} \|T^{n-i} x\|^2.
\end{equation}
Moreover, the first lines of the proof of \cite[Theorem 4.1]{CCEL} show that for every $N\in \mathbb{N}$,
\begin{equation}\label{estdisK4}
\sum_{i=0}^{N} \|T^{i} x\|^2 \leq e^2K_T^2 (N+1)^2.
\end{equation}
Combining the inequalities \eqref{estdisK3} and \eqref{estdisK4}, and arguing as in the proof of Theorem \ref{estdisc}, we obtain the desired estimate.
\end{proof}

\subsection{Continuous case}

We refer to \cite{Arendt} for the general theory of semigroup.
\begin{defi}
 Let  $T=(T_t)_{t\geq0}$ be a  $C_0$-semigroup on a Banach space and let $A$ be its generator.
 \begin{enumerate}[(i)]
 \item $T$ is Kreiss bounded if there exists $K>0$ such that, for every $\lambda\in\cc$, $Re(\lambda)>0$ we have $\lambda \in \rho(A)$ and
$$\|R(\lambda,A)\|\leq \frac{K}{{\rm Re\,}(\lambda)}.$$
The smallest possible $K$ is the Kreiss constant $K_T$ of the semigroup.
  \item $T$ is Cesàro bounded if there exists $C>0$ such that, for every $x\in X$ and every $t>0$ we have
  $$\Big\|\frac{1}{t}\int_0^t T_sxds\Big\|\leq C\|x\|.$$
The smallest possible $C$ is the  Cesàro constant $C_T$ of the semigroup.
 \end{enumerate}
\end{defi}

For positive and Kreiss bounded semigroups on commutative $L_p$-spaces, Arnold and Coine in \cite{Arnold} obtained the estimate $\|T_t\|= \O \left( \frac{t}{\log(t)^{\max\{1/p,1/p'\}}} \right)$. This has been recently improve to
$\|T_t\|= \O \left( t^{1-\varepsilon_T}\right)$
for commutative $L_p$-spaces in \cite{Arnold3}.
As before, we extend it for non commutative $L_p$. The scheme is the same as in the discrete case.

\begin{prop}\label{kreisscescont}
Let $(T_t)_{t\geq 0}$ be a positive $C_0$-semigroup on $L_p(\M)$ with $(\M,\tau)$ semifinite. Then
$(T_t)_{t\geq 0}$ is Kreiss bounded iff  $T$ is Cesàro bounded. Moreover, we have $\frac 1 e C_T\leq K_T \leq 4C_T$.
\end{prop}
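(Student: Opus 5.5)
We follow the proof of Proposition \ref{kreissces}, replacing sums by integrals and powers $T^k$ by $T_s$. The basic tool is the Laplace representation: whenever ${\rm Re\,}\lambda$ exceeds the growth bound,
$$R(\lambda,A)=\int_0^\infty e^{-\lambda s}T_s\,ds .$$
Positivity together with Corollary \ref{monot} lets us compare norms of such integrals with norms of positive dominating integrals.

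\emph{Cesàro bounded implies Kreiss bounded.} Assume $\|\int_0^t T_s\,ds\|\le C_T t$ for all $t>0$. This forces $\|T_t\|$ to grow at most polynomially; at the very least the growth bound satisfies $\omega(T)\le 0$.

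One way to see this is to set $F(t)=\int_0^t T_s\,ds$ and use $T_tF(h)=F(t+h)-F(t)$. This gives $\|T_t F(h)\|\le C_T(2t+h)$. Since $F(h)/h\to I$ strongly, the uniform boundedness principle then yields $\|T_t\|=\O(t)$.

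Hence, for every $a={\rm Re\,}\lambda>0$, the integral $\int_0^\infty e^{-\lambda s}T_s\,ds$ converges and equals $R(\lambda,A)$. Integrating by parts (the continuous Abel transform) gives
$$\int_0^N e^{-as}T_s\,ds = e^{-aN}F(N) + a\int_0^N e^{-as}F(s)\,ds ,$$
so its norm is at most $C_T N e^{-aN}+C_T a\int_0^\infty s e^{-as}\,ds\le 2C_T/a$.

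Now write $\lambda=a+ib$. In the positive cone we have
$$-\int e^{-as}T_s \le \int e^{-as}\cos(bs)T_s \le \int e^{-as}T_s ,$$
and the same holds with $\sin$. Corollary \ref{monot} (applied to $X+\int e^{-as}T_s\le 2\int e^{-as}T_s$ and to the difference) bounds each of the real and imaginary parts by $2\cdot 2C_T/a$ up to a constant. Splitting the estimate exactly as in the discrete proof should give ${\rm Re\,}(\lambda)\,\|R(\lambda,A)\|\le 4C_T$.

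\emph{Kreiss bounded implies Cesàro bounded.} If $T$ is Kreiss bounded, then $s(A)\le 0$. We still need the Laplace formula for $R(a,A)$, $a>0$, as a convergent improper integral of positive operators. For positive semigroups on ordered spaces with normal cone, the abscissa of the Laplace integral equals $s(A)$ (\cite{Arendt}, the Pringsheim–Landau type theorem for positive semigroups), and we will invoke that. Alternatively, for $x\ge0$ one can use the Post–Widder/exponential formula to see that $\int_0^N e^{-as}T_sx\,ds$ increases and is dominated by $R(a,A)x$.

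Then we take $a=1/t$:
$$R(1/t,A)\ge\int_0^t e^{-s/t}T_s\,ds\ge e^{-1}\int_0^t T_s\,ds\ge 0 .$$
By Corollary \ref{monot}, $\|\int_0^tT_s\,ds\|\le e\,\|R(1/t,A)\|\le e\,K_T\,t$, that is, $C_T\le eK_T$.

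\emph{Main obstacle.} The delicate step is justifying the Laplace representation of the resolvent, as a limit of positive truncated integrals, on the whole half-plane ${\rm Re\,}\lambda>0$, when we only know the resolvent estimate there. We would first try to cite the positivity result from \cite{Arendt}, namely that $s(A)$ equals the abscissa of convergence of the Laplace transform for positive semigroups on Banach lattices or normal cones, checking that $L_p(\M)^+$ is a normal cone via \eqref{possa}. If that citation does not cover the non commutative setting, we would instead argue directly with the monotone, norm-bounded nets $\int_0^N e^{-as}T_sx\,ds$ in $L_p(\M)^+$ and use uniform convexity to obtain their convergence.
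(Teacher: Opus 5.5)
Your plan is the paper's: transcribe the proof of Proposition~\ref{kreissces}, where the only real issue is the Laplace representation of $R(\lambda,A)$ on $\{{\rm Re\,}\lambda>0\}$. The paper settles this with exactly the result you propose to cite (Theorem~5.3.1 of \cite{Arendt}, positive semigroups on spaces with normal cone), and your Kreiss $\Rightarrow$ Ces\`aro argument, $R(1/t,A)\geq e^{-1}\int_0^tT_s\,ds\geq 0$ together with Corollary~\ref{monot}, is the paper's.

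\textbf{The failing step.} In the Ces\`aro $\Rightarrow$ Kreiss direction, $\|T_t\|=\O(t)$ does not follow from $\|T_tF(h)\|\leq C_T(2t+h)$. That estimate only gives $\|T_t(F(h)/h)\|\leq C_T(2t+h)/h$, which blows up as $h\to 0$. So the strong convergence $F(h)/h\to I$ yields no bound on $\|T_tx\|$, and there is no pointwise bound to feed into the uniform boundedness principle. In fact no argument of this type (semigroup law plus Ces\`aro bound, even with positivity) can work. Take $e^{t/2}$ times the left shift on $C_0(\bR_+)\cap L_1(\bR_+,e^s\,ds)$: it is a positive semigroup whose generator $A$ is invertible with $\|T_tA^{-1}\|\to 0$ exponentially. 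Hence $\int_0^tT_s\,ds=(T_t-I)A^{-1}$ is bounded, while $\|T_t\|=e^{t/2}$. (On $L_p(\M)$ linear growth is true, but it needs Corollary~\ref{convo}, as at the end of the proof of Theorem~\ref{estcont}.)

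\textbf{The fix.} The step is superfluous. Your integration by parts holds verbatim for complex $\lambda$ with ${\rm Re\,}\lambda>0$, and shows that $\int_0^Ne^{-\lambda s}T_s\,ds$ converges in operator norm to some $L(\lambda)$. Since $L(\lambda)$ commutes with every $T_h$ and $\frac1h(T_h-I)L(\lambda)x\to\lambda L(\lambda)x-x$, you get $\lambda\in\rho(A)$ and $L(\lambda)=R(\lambda,A)$. This also supplies the inclusion $\{{\rm Re\,}\lambda>0\}\subset\rho(A)$ required by the definition of Kreiss boundedness. Alternatively, cite Theorem~5.3.1 of \cite{Arendt} here too, as the paper does.

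\textbf{The constant.} Let $N\to\infty$ first: the boundary term vanishes and $\|\int_0^\infty e^{-as}T_s\,ds\|\leq C_T/a$. Corollary~\ref{monot} then costs a factor $2$ for each of the cosine and sine parts, which gives exactly ${\rm Re\,}(\lambda)\|R(\lambda,A)\|\leq 4C_T$. With your truncated bound $2C_T/a$ you would only get $8C_T$.
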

\begin{proof}
 The proof is basically the same as Proposition \ref{kreissces}. It is also similar to Proposition 2.6 \cite{Arnold} for commutative $L_p$. The only point is to justify that the resolvent formula $R(\lambda,T)=\lim_{T\to \infty} \int_0^T e^{-\lambda s} T_sds $ holds in $\{\lambda\in \bC \mid {\rm Re}\,\lambda>0\}$. This is exactly Theorem 5.3.1 in \cite{Arendt} because non commutative $L_p$ are Banach spaces with normal positive cone.
 We skip the details.
\end{proof}
 
\begin{thm}\label{estcont}
Let $1<p<\infty$ and let $(T_t)_{t\geq 0}$ be a positive Kreiss bounded $C_0$-semigroup on $L_p(\M)$ with $(\M,\tau)$ semifinite.  Then there is some $\delta_T>0$ such that $\|T_t\| =\mathcal{O}\big(t^{1-\delta_T}\big)$.
\end{thm}
\begin{proof}
    The proof starts as for Theorem \ref{estdisc}. Replacing $\sum_{l=m}^{s-1} T^l$ by $\int_{m}^s T_udu$, one ends up with 
    $$\Big\| \int_{t-1}^t T_u du\Big\|= \mathcal{O}\big(t^{1-\delta_T}\big).$$
for some $\delta_T>0$. 

Now we consider the convolution with the strongly continuous operator valued function $k_t: u\mapsto 1_{[t-2,t]}(u) T_u$ (see \cite{Arendt} Section 1.3). It yields a positive continuous map  $K_t:L_p(\bR;L_p(\M))\to L_p(\bR;L_p(\M))$ because $(T_u)_{u\geq 0}$ is positive. Using Corollaries \ref{convo} and \ref{monot} and the abvove bound, 
$$\|K_t\| \leq \Big\|\int_{t-2}^t T_u du\Big\| =\mathcal{O}\big(t^{1-\delta_T}\big).$$
Consider $x\in L_p(\M)^+$ with norm one and $f(u)=1_{[0,1]}T_u(x)$. Then 
$\|f\|_p\leq \sup_{u\in [0,1]} \|T_u\|=M_T$ and $K_t(f)(t-s)= \int_0^1 k_t(t-s-u)T_{u}(x)=T_{t-s}(x)$ for $s\in [0,1]$. It gives
$$ \int_0^1 \|T_{t-s}(x)\|_p^p ds\leq \|K\|^p M_T^p.$$
By strong continuity there is some $s\in[0,1]$ with $\|T_{t-s}(x)\|_p\leq \|K_t\|M_T$ and thus  $\|T_{t}(x)\|_p\leq \|K_t\|M_T^2$. This concludes the proof.
\end{proof}

As in the previous section, for Hilbert spaces, one can remove the positivity assumption, giving an alternative but close  proof to \cite{Arnold3} :
\begin{thm}\label{estconth}
Let $(T_t)_{t\geq 0}$ be a  Kreiss bounded $C_0$-semigroup on a Hilbert space $\mathcal H$.  Then there is some $\delta_T>0$ such that $\|T_t\| =\mathcal{O}\big( t^{1-\delta_T}\big)$.
\end{thm}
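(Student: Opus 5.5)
We follow the proof of Theorem \ref{estdischil}, with the Fourier series on $\mathbb T$ replaced by the Fourier transform on $\bR$ (Plancherel for $L_2(\bR;\mathcal H)$), and then conclude as in Theorem \ref{estcont}. Throughout, $A$ is the generator. Fix $t\geq s>m\geq 0$ and $x\in\mathcal H$ with $\|x\|=1$. For $\xi\in\bR$, set $\lambda=\frac1s+i\xi$ and
$$\mathcal A(\xi)=R(\lambda,A)=\int_0^\infty e^{-\lambda u}T_u\,du,\qquad \mathcal B(\xi)x=\int_m^s e^{-i\xi(t-v)}T_{t-v}x\,dv.$$
The Kreiss condition gives $\|\mathcal A(\xi)\|\leq sK_T$ for all $\xi$. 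The Laplace formula for the resolvent needs justification. I would use it for ${\rm Re}\,\lambda>\omega(T)$ and extend it by analyticity. Alternatively, I would work with $\mathcal A(\xi)\mathcal B(\xi)x$ directly: this is the Fourier transform of a compactly supported convolution, and that identification holds once we know $\|T_u\|$ grows at most polynomially. The linear bound $\|T_u\|=\mathcal O(u)$ of \cite{Eisner2006} gives exactly this.

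The product $\mathcal A(\xi)\mathcal B(\xi)x$ is the Fourier transform (up to normalisation) of the function $w\mapsto c(w)T_wx$. Here
$$c(w)=\int_{\{v\in[m,s],\ u=w-t+v\geq 0\}} e^{-u/s}\,dv\geq e^{-1}(s-m-j)\quad\text{for } w=t-j,\ 0\leq j\leq s-m,$$
which is the continuous analogue of \eqref{estdisK2}. The key observation is that the semigroup property lets the composition $T_uT_{t-v}=T_{u+t-v}$ collapse into a single function of $w$. Plancherel on both sides then yields, exactly as in \eqref{estdisK3},
$$\frac{(s-m)^2}{4e^2s^2}\int_0^{\frac{s-m}2}\|T_{t-j}x\|^2dj\leq K_T^2\int_m^s\|T_{t-v}x\|^2dv.$$
For the a priori bound, the continuous analogue of \eqref{estdisK4}, I would show $\int_0^N\|T_ux\|^2du\leq C K_T^2N^2$. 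This follows from Plancherel applied to $\xi\mapsto R(\frac1N+i\xi,A)x$, which is the Fourier transform of $e^{-u/N}T_ux$. That gives $\int_0^\infty e^{-2u/N}\|T_ux\|^2du=\frac1{2\pi}\int\|R(\tfrac1N+i\xi,A)x\|^2d\xi$. The right-hand side is not directly bounded by the Kreiss constant alone. Following \cite{Eisner2006} and \cite{CCEL}, I would use the resolvent identity: writing $R(\lambda,A)x$ as the product of $R$ at a shifted point with a decaying factor bounds the $\xi$-integral, giving $\lesssim K_T^2N\cdot N$ after using both $T$ and its adjoint semigroup. I expect this to be the main obstacle. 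In the discrete case it was quoted from \cite{CCEL}, and here it must be adapted carefully.

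With these two inequalities, the dyadic iteration of Theorem \ref{estdisc} applies with $u_l=\int_0^l\|T_{t-j}x\|^2dj$, taking $m=2l$, $s=4l$ and scales $4^k$ up to $t$. It gives $\int_{t-1}^{t}\|T_ux\|^2du=\mathcal O(t^{2-2\delta_T})$ uniformly in $\|x\|=1$, for some $\delta_T>0$ depending only on $K_T$. Finally, we pass to pointwise norms using $T_tx=T_{s}T_{t-s}x$ with $s\in[0,1]$ and $\|T_s\|\leq M_T$:
$$\|T_tx\|^2\leq M_T^2\int_0^1\|T_{t-s}x\|^2ds.$$
This gives $\|T_t\|=\mathcal O(t^{1-\delta_T})$.
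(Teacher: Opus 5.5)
Your proposal is correct and follows the paper's proof essentially step for step: the same factors $R(\tfrac1s+i\xi,A)$ and $\int_m^s e^{-i\xi(t-v)}T_{t-v}x\,dv$, the same collapse of $T_uT_{t-v}$ into a scalar weight $c(w)$ (the paper's $\gamma$) with $c(t-j)\geq e^{-1}(s-m-j)$, the same dyadic iteration, and the same passage to pointwise norms. The Laplace representation is justified in both by a priori growth ($o(t)$ from \cite{Arnold2} in the paper, $\mathcal O(t)$ from \cite{Eisner2006} in yours). The a priori bound $\int_0^N\|T_ux\|^2du\lesssim_T N^2$, which the paper leaves implicit, is easier than you fear and needs no adjoint: $R(\tfrac1N+i\xi,A)=\big(I+(1-\tfrac1N)R(\tfrac1N+i\xi,A)\big)R(1+i\xi,A)$ gives $\|R(\tfrac1N+i\xi,A)x\|\leq(1+K_TN)\|R(1+i\xi,A)x\|$, and $\int_\bR\|R(1+i\xi,A)x\|^2d\xi=2\pi\int_0^\infty e^{-2u}\|T_ux\|^2du\lesssim_T\|x\|^2$ by the linear growth bound.
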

\begin{proof}
    It is already known  \cite{Arnold2} that $\|T_t\| =o\big(t\big)$. It follows that the strongly continuous map
    $\bR\to B(\mathcal H); y\mapsto \chi_{y>0}e^{-yr} T_y$ has $A: y\mapsto R(r+iy)$ as Fourier transform for $r>0$. As $T$ is Kreiss $\sup_{y\in \bR} \|A(y)\|\leq \frac {K_T}{r}$. 
    
    Fix $x\in\mathcal H$, let $0<m<s<t$ and set $B(y)=\int_m^s 
    e^{-iy(t-u)}T_{t-u}(x)du$. By the Plancherel theorem $\|B\|_{L_2(\bR;\mathcal H)}^2=2\pi\int_m^s \|T_{t-u}(x)\|^2du$. Using an easy change of variable, we have
    $$A(y)B(y)=\int_0^\infty e^{-iyz}\left(\int_m^s e^{-r(z+u-t)}\chi_{z+u\geq t}  du\right)T_z(x) dz:=\int_0^\infty e^{-iyz} \gamma(z)T_z(x) dz .$$ 
    The Plancherel theorem and the previous estimates give
    $$\int_0^{\frac {s-m}2}  |\gamma(t-v)|^2\|T_{t-v}(x)\|^2 dv\leq\int_0^\infty  |\gamma(z)|^2\|T_z(x)\|^2 dz\leq\frac {K_T^2}{r^2} \int_m^s \|T_{t-u}(x)\|^2du.$$
    But if we take  $r=\frac 1 s$, when $0<v<\frac {s-m}2$, then $|\gamma(t-v)|\geq \frac 1 {e}(s-\max\{m,v\})\geq \frac{s-m}{2e}$. Hence
    $$\frac {(s-m)^2}{4e^2}\int_0^{\frac {s-m}2}  \|T_{t-v}(x)\|^2 dv\leq\int_0^\infty  |\gamma(z)|^2\|T_z(x)\|^2 dz\leq s^2K_T^2 \int_m^s \|T_{t-u}(x)\|^2du.$$
    Iterating as in Theorem \ref{estdisc}, we end up with
    $$\int_{t-1}^t \| T_u(x)\|^2 du\lesssim_{T} t^{2(1-\delta_T)} \|x\|^2.$$
    And one concludes as in the end of the proof of Theorem \ref{estcont}.
\end{proof}
\subsection{Spectral bounds} Let $T=(T_t)_{t\geq 0}$ be a $C_0$-semigroup with generator $A$. Recall the definitions for the spectral bound
$$s(A)=\sup\{ {\rm Re\,} \lambda ; \;\lambda\in \sigma(A)\}$$
and the growth bound
$$\omega(T)=\inf\{ \omega\in \bR\, |\, \sup_{t>0} e^{-t\omega} \|T_t\|<\infty\}.$$
We show below that for positive semigroups on a non commutative $L_p$-space, $s(A)=\omega(T)$.
Thanks to Corollary \ref{convo}, the proof of \cite{Wei98} in the commutative setting can be readily adapted to establish the same equality in the non commutative setting.
\begin{thm}
Let $T=(T_t)_{t\geq 0}$ be a positive $C_0$-semigroup on $L_p(\M)$ where $(\M,\tau)$ is semifinite and $1\leq p<\infty$, then $s(A)=\omega(T)$. 
\end{thm}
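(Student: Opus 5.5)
We follow Weis's argument \cite{Wei98}, using Corollary \ref{convo} wherever the commutative proof relies on Schur-type bounds for convolutions with positive operator kernels.

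The inequality $s(A)\leq\omega(T)$ always holds, so only $\omega(T)\leq s(A)$ needs proof. Rescaling via $e^{-\mu t}T_t$ (still positive, with generator $A-\mu$), it suffices to show: if $s(A)<0$ then $\omega(T)<0$. For $p=1$ the space $L_1(\M)$ has additive norm on the positive cone, and the standard argument applies. Indeed, for positive semigroups on a Banach lattice or ordered space with normal cone, $s(A)$ equals the abscissa of convergence of the Laplace integral (Theorem 5.3.1 in \cite{Arendt}, which the paper already uses via the normal positive cone). Thus $R(0,A)x=\int_0^\infty T_sx\,ds$ for $x\geq0$, so $\int_0^\infty\|T_sx\|_1ds=\|R(0,A)x\|_1<\infty$, and Datko--Pazy gives $\omega(T)<0$. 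Assume now $1<p<\infty$.

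Again using Theorem 5.3.1 of \cite{Arendt}, for $s(A)<0$ the improper integral $V:=\int_0^\infty T_u\,du$ converges strongly on the positive cone to $R(0,A)$, a bounded positive operator. We next use a Datko-type criterion: it suffices to show that the convolution operator $f\mapsto \int_0^\infty T_u f(\cdot-u)\,du$ is bounded on $L_p(\bR;L_p(\M))$. By a standard argument (Datko/Weiss, as in \cite{Wei98}, or via the input--output bound giving $\int_0^\infty\|T_t x\|^p dt<\infty$), this boundedness yields $\omega(T)<0$.

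To bound this convolution, truncate the kernel to $k_N(u)=1_{[0,N]}(u)T_u$. This kernel is strongly continuous on $[0,N)$, compactly supported, uniformly bounded and positive. Corollary \ref{convo} then gives
$$\|K_{k_N}\|\leq\Big\|\int_0^N T_u\,du\Big\|\leq \|R(0,A)\|,$$
where the last inequality follows from Corollary \ref{monot}, since $0\leq\int_0^N T_u du\leq R(0,A)$ as positive operators. The discontinuity at $N$ is harmless: either approximate $1_{[0,N]}$ by continuous cutoffs, or note that the proof of Corollary \ref{convo} only uses strong measurability and boundedness. The bound is uniform in $N$. For $f\geq0$ the truncated convolutions increase with $N$, so they converge in norm by uniform convexity, or by the Fatou property together with monotonicity of the norm on the cone. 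Hence the full convolution is bounded by $\|R(0,A)\|$.

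To extract exponential stability, take $x\geq0$ and $f=1_{[0,1]}T_\cdot x$, exactly as at the end of the proof of Theorem \ref{estcont}. This gives $\int_1^\infty\|T_tx\|_p^pdt\lesssim \|R(0,A)\|^pM_T^{2p}\|x\|^p$, and by Jordan decomposition the same holds for all $x$. Datko's theorem then gives $\omega(T)<0$.

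The main obstacle is the middle step: justifying that $\int_0^N T_u du$ is dominated by $R(0,A)$, i.e.\ that $s(A)<0$ really yields convergence of the Laplace integral at $0$. This is exactly where positivity and the normality of the cone of $L_p(\M)$ enter, through \cite{Arendt} Theorem 5.3.1. The limiting argument for the convolution operator is the other point needing care.
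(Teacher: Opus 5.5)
Your proposal is correct and follows essentially the paper's argument: Theorem 5.3.1 of \cite{Arendt} controls $\int_0^N T_u\,du$, Corollary \ref{convo} bounds the truncated convolution, which you test on $f=1_{[0,1]}T_\cdot x$, and Datko's theorem concludes. The differences are cosmetic. You treat $p=1$ directly through additivity of the trace on the positive cone, and you bound $\|\int_0^N T_u\,du\|$ by $\|R(0,A)\|$ via Corollary \ref{monot} rather than by uniform boundedness. You also pass to the full convolution operator, which is harmless but unnecessary, since applying the truncated kernels to $f$ and letting $N\to\infty$ already gives $\int_1^\infty\|T_t x\|_p^p\,dt<\infty$.
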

\begin{proof}
It is well known that $s(A)\leq \omega(T)$ holds for all semigroups
(Theorem 5.1.9 in \cite{Arendt}). 
By a change of semigroup, it suffices to show that $\omega(T)<0$ if $s(A)<0$.

  From Theorem 5.3.1 in \cite{Arendt}, if $s(A)<0$, then $$\sup_{T>0} \Big\|\int_0^T T_t dt\Big\|:= M<\infty.$$ 
  Consider for $x\in L_p(\M)$, $f\in L_p(\bR^+;L_p(\M))$ given by $f(t)=\chi_{t<1}T_t(x)$. Using convolution with the operator kernel $K(t)=\chi_{0<t<T} T_t$, Corollary \ref{convo} gives
  $$\int_1^{T-1} \|T_t(x)\|_p^p dt \leq M^p \int_0^1  \|T_t(x)\|_p^p dt.$$
  Letting $T\to \infty$, we get that $t\mapsto T_t(x)\in L_p(\bR^+;L_p(\M))$. Thus by Datko's theorem (Theorem 5.1.2 in \cite{Arendt}), $\omega(T)<0$.
  \end{proof}

\section{Optimal examples}

\subsection{Some weights}

\begin{notation}
Let $\omega$ be a weight over an interval $I$, i.e. non negative and locally integrable. We denote for $1< p<\infty$
\begin{align}\label{Ap}[\omega]_{\{A_p,I\}} = \sup_{\{(y,z)\in I, y<z\}} \frac{1}{(z-y)^p} \int_y^{z} \omega(x)dx\Big( \int_y^{z} \omega(x)^{-\frac 1{p-1}}dx\Big)^{p-1}. \end{align}
If $[\omega]_{\{A_p,I\}}<\infty$, we will say that $\omega$ satisfies the $A_p$-condition on $I$.
\end{notation}

This corresponds to the Muckenhoupt condition on $I$. We have the following useful easy dilation invariance.
\begin{lemma}\label{inv}
If a weight $\omega$ satisfies the $A_p$-condition on the interval $[a,b]$ then any dilation $\omega_c:x\mapsto \omega(x/c)$, $(c>0)$ also satisfies it on $[ca,cb]$ and $[\omega]_{\{A_p,[a,b]\}}=[\omega_c]_{\{A_p,[ca,cb]\}}$.
\end{lemma}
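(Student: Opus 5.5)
The plan is a direct change of variables in the supremum defining $[\omega_c]_{\{A_p,[ca,cb]\}}$. The map $y\mapsto y/c$ is a bijection between pairs $(y,z)$ with $ca\leq y<z\leq cb$ and pairs $(y',z')=(y/c,z/c)$ with $a\leq y'<z'\leq b$. So it suffices to show that each quotient in \eqref{Ap} computed for $\omega_c$ on $[y,z]$ equals the quotient computed for $\omega$ on $[y/c,z/c]$. Taking suprema on both sides then gives the equality of the constants. In particular, one is finite if and only if the other is, so $\omega_c$ satisfies the $A_p$-condition on $[ca,cb]$.

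First, $\omega_c$ is again a weight on $[ca,cb]$: it is nonnegative, and it is locally integrable by the substitution $x=cx'$. Then fix $ca\leq y<z\leq cb$ and substitute $x=cx'$, $dx=c\,dx'$ in both integrals:
$$\int_y^z \omega(x/c)\,dx = c\int_{y/c}^{z/c}\omega(x')\,dx', \qquad \int_y^z \omega(x/c)^{-\frac1{p-1}}\,dx = c\int_{y/c}^{z/c}\omega(x')^{-\frac1{p-1}}\,dx'.$$
The second integral is raised to the power $p-1$, so the product picks up a total factor $c\cdot c^{p-1}=c^p$. The normalisation also changes: $(z-y)^p=c^p\,(z/c-y/c)^p$. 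The two factors of $c^p$ cancel exactly, which gives the claimed identity of quotients.

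There is no real obstacle here. The only point to check is that the exponents match, which works because the power $p$ in the normalisation is exactly $1+(p-1)$. A minor detail is the case where $\omega$ vanishes on a set of positive measure. Then $\omega^{-1/(p-1)}$ may take the value $+\infty$, and both sides of the identity are $+\infty$ simultaneously, so the equality still holds in $[0,\infty]$.
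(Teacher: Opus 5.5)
Your proof is correct and is the change-of-variables argument the paper has in mind; the paper calls this an easy dilation invariance and omits the proof. The cancellation of $c\cdot c^{p-1}=c^p$ against $(z-y)^p=c^p(z/c-y/c)^p$, combined with the bijection $(y,z)\mapsto(y/c,z/c)$ between admissible pairs, is the whole content.
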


From now on, we consider the translation semigroup $T$ on $I$  given by for $f\in L_p(I,\omega)$, $x\in I$ 
$$ T_{t} (f)(x)=\left\{
\begin{array}{ll}
f(x+t) & \textrm{ if } x+t\in I\\
0 & \textrm{ if }x+t\notin I.
\end{array}\right.$$
The following is well known when $I=\bR$ or in the discrete case (see for instance \cite{MTX}). We provide an easy proof using the Schur test.  
\begin{prop} If $\omega$ satisfies the $A_p$-condition on $I$, then $(T_t)_{t\geq 0}$ is Cesàro bounded and $C_T\leq 2 [\omega]_{\{A_p,I\}}^{1/p}$.
\end{prop}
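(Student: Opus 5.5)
We will show, for each $t>0$, that the Cesàro mean $M_t=\frac1t\int_0^t T_s\,ds$ is bounded on $L_p(I,\omega)$ with norm at most $2[\omega]_{\{A_p,I\}}^{1/p}$. We reduce to an integral operator with positive kernel on unweighted $L_p(I)$ and apply Proposition~\ref{schurtest}.

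\textbf{Step 1: the kernel.} For $f\ge 0$ in $L_p(I,\omega)$,
$$M_t f(x)=\frac1t\int_x^{x+t} f(y)\,1_{y\in I}\,dy .$$
Conjugating by the isometry $L_p(I,\omega)\to L_p(I)$, $f\mapsto \omega^{1/p}f$, the operator becomes the integral operator with kernel
$$K(x,y)=\frac1t\,\omega(x)^{1/p}\,\omega(y)^{-1/p}\,1_{x\le y\le x+t},\qquad x,y\in I .$$
Write $\sigma=\omega^{-1/(p-1)}=\omega^{-p'/p}$ for the dual weight.

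\textbf{Step 2: the splitting $K\le A^{1/p'}B^{1/p}$.} We take the splitting suggested by the characterization following Proposition~\ref{schurtest}, with local test functions. For fixed $x$, the relevant window is $J_x=[x,x+t]\cap I$; for fixed $y$, it is $J'_y=[y-t,y]\cap I$. Set
$$A(x,y)=\frac{\sigma(y)}{\int_{J_x}\sigma}\,1_{y\in J_x},\qquad B(x,y)=\frac{\omega(x)}{\int_{J'_y}\omega}\,1_{x\in J'_y}.$$
Then $\int A(x,y)\,dy=1$ and $\int B(x,y)\,dx=1$. It remains to check the pointwise inequality $K\le C\,A^{1/p'}B^{1/p}$ on the support $\{x\le y\le x+t\}$. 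Since $\sigma^{1/p'}=\omega^{-1/p}$, this inequality is equivalent to
$$\frac{1}{t}\le C\Big(\int_{J_x}\sigma\Big)^{-1/p'}\Big(\int_{J'_y}\omega\Big)^{-1/p}.$$

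\textbf{Step 3: the main obstacle.} The two windows $J_x$ and $J'_y$ differ, and this is the delicate point. Both are contained in $L=[y-t,x+t]\cap I$, an interval of length at most $2t$, because $y-x\le t$. The $A_p$ condition on $L$ gives
$$\Big(\int_{J'_y}\omega\Big)\Big(\int_{J_x}\sigma\Big)^{p-1}\le\Big(\int_L\omega\Big)\Big(\int_L\sigma\Big)^{p-1}\le [\omega]_{\{A_p,I\}}\,(2t)^p .$$
Taking $p$-th roots, the required inequality holds with $C=2[\omega]_{\{A_p,I\}}^{1/p}$.

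\textbf{Step 4: conclusion.} Proposition~\ref{schurtest} with $\alpha=\beta=1$ then gives $\|M_t\|\le 2[\omega]_{\{A_p,I\}}^{1/p}$ for every $t$. Hence $(T_t)_{t\geq 0}$ is Cesàro bounded with $C_T\le 2[\omega]_{\{A_p,I\}}^{1/p}$. Degenerate windows, such as $\int_{J_x}\sigma=\infty$ or $J_x$ of measure zero near the endpoints of $I$, can be discarded: the $A_p$ condition forces these integrals to be finite and positive on nondegenerate intervals.
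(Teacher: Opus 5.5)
Your proof is correct and is essentially the paper's argument: you conjugate by $\omega^{1/p}$, apply the Schur test with the same $A(x,y)=\sigma(y)\big(\int_{J_x}\sigma\big)^{-1}1_{y\in J_x}$, and invoke the $A_p$ condition on an interval of length at most $2t$. The only difference is where the $A_p$ constant is paid. The paper takes $B(x,y)=t^{-p}\omega(x)\big(\int_{J_x}\sigma\big)^{p-1}1_{x\le y\le x+t}$, so that $K=A^{1/p'}B^{1/p}$ exactly, and pays it in $\int B\,dx\le 2^p[\omega]_{\{A_p,I\}}$ using the window $[y-t,y+t]$. You instead normalize $B$ to have integral $1$ and pay it in the pointwise bound $K\le 2[\omega]_{\{A_p,I\}}^{1/p}A^{1/p'}B^{1/p}$ using the window $[y-t,x+t]$; after rescaling $A$ and $B$ by that constant, this gives the same estimate.
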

\begin{proof}
We consider the standard isometric bijection 
$$\iota: L_p(I,\omega)\to L_p(I), f\mapsto f{\omega^{1/p}}, \qquad \kappa: L_p(I)\to L_p(I,\omega), f\mapsto f\omega^{-1/p}.$$

 The norm of the Ces\`aro  mean, $M_t=\frac 1 t\int_0^t T_u du$ on $L_p(I,\omega)$ with $t>0$, is the same as that of $\tilde M_t=\iota M_t\kappa$ on $L_p(I)$. The kernel of $\tilde M_t$ is 
 $K_t(x,y)= \frac{1}{t} \chi_{x\leq y\leq x+t} \frac{\omega(x)^{1/p}}{{\omega(y)^{1/p}}}$. To compute its norm, we use the Schur test Proposition \ref{schurtest}. We choose for $x,y\in I$ 
 $$A(x,y) = \chi_{x\leq y\leq x+t} \omega(y)^{-\frac 1{p-1}} \gamma(x)^{-1},\qquad B(x,y)=t^{-p} \chi_{x\leq y\leq x+t} \omega(x)\gamma(x)^{p-1}$$ where
$$\gamma(x) =  \int_{x}^{x+t} \omega(y)^{-\frac 1{p-1}}dy.$$
Of course, we extend functions outside $I$ by 0. First, we have
$$    \int_I  A(x,y)dy = \gamma(x)^{-1}\int_{x}^{x+t}  \omega(y)^{-\frac 1  {p-1}}dy= 1.
$$
And
\begin{align*}
     \int_I B(x,y)dx  &=  t^{-p} \int_{y-t}^{y} {\omega(x)} \Big(\int_{x}^{x+t}  \omega(z)^{-\frac 1{p-1}}dz\Big)^{p-1}dx\\
    & \leq t^{-p}  \int_{y-t}^{y} {\omega(x)} \Big(\int_{y-t}^{y+t}  \omega(z)^{-\frac 1{p-1}}dz\Big)^{p-1}dy\\
    &\leq t^{-p} \big( \int_{y-t}^{y+t} {\omega(x)}dy \big) \big( \int_{y-t}^{y+t}  \omega(z)^{-\frac 1{p-1}}dz \big)^{p-1} \\
    & \leq 2^p[\omega]_{\{A_p,I\}}.
\end{align*}
The estimate from the Schur test gives the conclusion.
\end{proof}
\begin{rk}
The point here is that we have a power $\frac 1p$ rather than $\frac 1{p-1}$ which would follow from the boundedness of maximal Hardy-Littlewood operator (when $I=\bR)$,  see \cite{Gra2}.
\end{rk}

\begin{rk}
When the weight is decreasing, we also have an $2C_T\geq [\omega]_{\{A_p,I\}}^{1/p}$.

Indeed, $T_{-t}$ is then a contraction and 
$\frac 1 {t} \int_{-t}^{t} T_udu$ has kernel $K_t(x,y)=\frac 1 t \chi_{|x-y|<t}$.
For fixed $y$ with $z=y+t$, $K_t(a,b)\geq \frac{1}{t} 1_{(y,z)}(a)1_{(y,z)}(b)$. The corresponding integral operator is of rank 1 with norm $\big( \int_{y}^{z} {\omega(u)}du \big)^{1/p} \big( \int_{y}^{z}  \omega(u)^{-\frac 1{p-1}}dz \big)^{1- 1/p}$, thus 
$$ [\omega]_{\{A_p,I\}}^{1/p}\leq C_T+ 1\leq 2 C_T.$$

\end{rk}
\begin{notation}
For $\gamma\in (0,1]$ and $1<p<\infty$, let us define $\omega_\gamma$ as a positive weight on $]-1,1[$ such that $$\omega_\gamma(x) = \left\{
\begin{array}{rr}
(x+1)^{-\gamma},& x\leq 0\\
(1-x)^{\gamma(p-1)},& x>0.
\end{array}
\right. $$
\end{notation}
We do not emphasize that $\omega_\gamma$ depends on $p$.

\begin{lemma}\label{estAp}
For $0<\gamma<1$ and $1<p<\infty$, we have,
$$[\omega_\gamma]_{\{A_p,]-1,1[\}}\leq  \left(  \frac{2}{1-\gamma}  \right)^{p}.$$
\end{lemma}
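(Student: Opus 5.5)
The plan is a direct computation of the $A_p$ quotient in \eqref{Ap} for every interval $[y,z]\subset\,]-1,1[$. Throughout we set $\sigma=\omega_\gamma^{-\frac1{p-1}}$ and use the two one-sided pieces explicitly:
\[
\omega_\gamma(x)=(1+x)^{-\gamma},\quad \sigma(x)=(1+x)^{\frac{\gamma}{p-1}} \qquad (x\leq 0),
\]
\[
\omega_\gamma(x)=(1-x)^{\gamma(p-1)},\quad \sigma(x)=(1-x)^{-\gamma} \qquad (x>0).
\]
On each half the weight is monotone: $\omega_\gamma$ is decreasing on $]-1,0]$ and on $]0,1[$, and $\sigma$ is increasing on both. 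The key bound to aim for is
\[
\frac{1}{z-y}\int_y^z \omega_\gamma \;\leq\; \frac{2}{1-\gamma}\,\omega_\gamma(z^*),
\qquad
\Big(\frac{1}{z-y}\int_y^z\sigma\Big)^{p-1}\leq \big(\text{comparable constant}\big)\,\omega_\gamma(z^*)^{-1},
\]
for a suitable point $z^*$. Multiplying these two bounds gives the quotient, and we need to keep the constants inside $(2/(1-\gamma))^p$.

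\textbf{Intervals inside one half.} Take first $[y,z]\subset\,]-1,0]$. Since $\sigma$ is increasing, $\frac1{z-y}\int_y^z\sigma\le\sigma(z)$, so the second factor is at most $(1+z)^{\gamma}$. For the first factor, integrate exactly: $\int_y^z(1+x)^{-\gamma}dx=\frac{(1+z)^{1-\gamma}-(1+y)^{1-\gamma}}{1-\gamma}$. Concavity of $u\mapsto u^{1-\gamma}$ gives $a^{1-\gamma}-b^{1-\gamma}\le (a-b)^{1-\gamma}$. Now split into two subcases. If $z-y\ge (1+z)/2$, then the first factor is at most $\frac{(z-y)^{-\gamma}}{1-\gamma}\le \frac{2^\gamma}{1-\gamma}(1+z)^{-\gamma}$. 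If $z-y< (1+z)/2$, then $1+y\ge(1+z)/2$, so $\omega_\gamma\le 2^\gamma(1+z)^{-\gamma}$ on $[y,z]$. In both subcases the product is at most $\frac{2}{1-\gamma}$.

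The half $]0,1[$ works symmetrically under $x\mapsto -x$. There $\sigma(x)=(1-x)^{-\gamma}$ plays the role that $\omega_\gamma$ played above, and the power $p-1$ matches. The first factor is bounded by its value at the left endpoint, $(1-y)^{\gamma(p-1)}$, while $\big(\frac1{z-y}\int_y^z\sigma\big)^{p-1}\le \big(\frac{2}{1-\gamma}\big)^{p-1}(1-y)^{-\gamma(p-1)}$ by the same two-subcase argument. Alternatively, this case can be reduced to the first one by noting that the $A_p$ quantity for $\omega$ equals the $A_{p'}$ quantity for $\sigma$ raised to the power $p-1$.

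\textbf{Intervals straddling $0$.} For $y<0<z$, split each integral at $0$. Two facts hold here:
\[
\omega_\gamma\le \max\big((1+x)^{-\gamma},1\big)\ \text{on } [y,z], \qquad \sigma\le \max\big(1,(1-x)^{-\gamma}\big)\ \text{on } [y,z].
\]
Each one-sided piece is then controlled by the computations already done, with the point $z^*$ taken to be $0$, where both pieces equal $1$. The main obstacle is this straddling case: an interval near $0$ can be short on one side and long on the other, and $\omega_\gamma$ and $\sigma$ blow up at opposite ends. The saving fact is that on $[y,0]$ we have $\omega_\gamma\ge1$ and $\sigma\le1$, while on $[0,z]$ we have $\omega_\gamma\le1$ and $\sigma\ge1$. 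So
\[
\frac{1}{z-y}\int_y^z\omega_\gamma\le \max\!\Big(1,\frac{1}{-y}\int_y^0\omega_\gamma\Big),
\]
and the analogous bound holds for $\sigma$. For a long left part we pair the exact integral $\frac{1-(1+y)^{1-\gamma}}{1-\gamma}\le\frac{-y}{1-\gamma}\cdot(\cdots)$ with $\sigma\le 1$ on the left, and we handle the right part similarly.

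Finally, we need to track carefully that every crude constant, such as $2^\gamma$ and the factors $\frac1{1-\gamma}$, stays within $\big(\frac{2}{1-\gamma}\big)^p$.
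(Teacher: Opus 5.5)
Your proposal is correct, and its skeleton is the paper's. Both use the same three cases ($z<0$, $y>0$, $y<0<z$). In the one-sided cases your dichotomy $z-y\geq\frac{1+z}{2}$ (resp. $z-y\geq\frac{1-y}{2}$) is exactly the paper's $b\leq 1$, and concavity of $u\mapsto u^{1-\gamma}$ does the work in both.

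In the one-sided cases the paper computes both integrals exactly in the scaled variable $b$. You bound one factor by monotonicity ($\sigma$ increasing, resp. $\omega_\gamma$ decreasing) and estimate only the other. This gives the same constants, $\frac{2^\gamma}{1-\gamma}$ and $\big(\frac{2^\gamma}{1-\gamma}\big)^{p-1}$, with less computation.

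The straddling case is where the routes genuinely differ. The paper enlarges $[y,z]$ to the symmetric interval $[-\tilde t,\tilde t]$ with $\tilde t=\max\{-y,z\}$, uses $t\geq\tilde t$, and bounds four integrals separately, reaching $\big(1+\frac1{1-\gamma}\big)^p$. You instead write the average over $[y,z]$ as a convex combination of the averages over $[y,0]$ and $[0,z]$. Combined with $\omega_\gamma\geq 1\geq\sigma$ on the left and $\omega_\gamma\leq 1\leq\sigma$ on the right, this gives
\[
\frac{1}{t}\int_y^z\omega_\gamma\,\Big(\frac1t\int_y^z\sigma\Big)^{p-1}\leq \Big(\frac{1}{-y}\int_y^0\omega_\gamma\Big)\Big(\frac1z\int_0^z\sigma\Big)^{p-1}.
\]
Your one-sided estimates at the point $0$ then close the case at once, with bound $\big(\frac{2^\gamma}{1-\gamma}\big)^p$. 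Your final sentence about ``pairing the exact integral $\ldots(\cdots)$ with $\sigma\leq 1$ on the left'' is therefore unnecessary and can be dropped.

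You can even do better. The map $s\mapsto 1-(1-s)^{1-\gamma}$ is convex with values $0$ and $1$ at $s=0,1$, so each one-sided average is at most $\frac{1}{1-\gamma}$, giving $(1-\gamma)^{-p}$ in this case.

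Your reduction is shorter and sharper. It rests on $\omega_\gamma$ being globally decreasing and $\sigma$ globally increasing with common value $1$ at $0$, and it reuses the one-sided cases. The paper's version is a self-contained explicit computation that does not need this sign observation. Both stay within $\big(\frac{2}{1-\gamma}\big)^p$.
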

\begin{proof} Set $I=]-1,1[$. 
    To prove it, we will compute the supremum \eqref{Ap} over $y,z\in I$, $y<z$ according to the three cases $z<0$, $y>0$ and $y<0<z$. 
    We also set $t=z-y>0$.

{\it Case 1}. By a direct computation, we have
\begin{align*}
  &\frac{1}{t^p} \int_y^{y+t} (x+1)^{-\gamma}dx \left(\int_y^{y+t} (1+x)^{\frac{\gamma}{p-1}}dx\right)^{p-1} =\\
      &  \left[ \frac{1}{1-\gamma} \left( \frac{(y+t+1)^{1-\gamma}}{t^{1-\gamma}} - \frac{(y+1)^{1-\gamma}}{t^{1-\gamma}} \right) \left(\frac{1}{1+\frac{\gamma}{p-1}}\right)^{p-1} \left( \frac{(y+t+1)^{1+\frac{\gamma}{p-1}} - (y+1)^{1+\frac{\gamma}{p-1}}}{t^{1+\frac{\gamma}{p-1}}} \right)^{p-1} \right]
\end{align*}
Let $b=\frac{y+1}{t}$, $b\in]0,+\infty[$, so that 
\begin{align*}
     &\frac{1}{t^p}   \left( (y+t+1)^{1-\gamma} - (y+1)^{1-\gamma} \right)  \left( (y+t+1)^{1+\frac{\gamma}{p-1}} - (y+1)^{1+\frac{\gamma}{p-1}} \right)^{p-1} \\
    & = \left( (1+b)^{1-\gamma} - b^{1-\gamma} \right)\left( (1+b)^{1+\frac{\gamma}{p-1}} -b^{1+\frac{\gamma}{p-1}}  \right)^{p-1}.
\end{align*}
If $b\leq1$, by concavity and convexity, we have 
\begin{align*}
    \left( (1+b)^{1-\gamma} - b^{1-\gamma} \right) \leq 1, \quad 
    & \left( (1+b)^{1+\frac{\gamma}{p-1}} -b^{1+\frac{\gamma}{p-1}}  \right)^{p-1} \leq  \Big(1+\frac{\gamma}{p-1}\Big)^{p-1} 2^\gamma
\end{align*}
and if $b\geq1$
\begin{align*}
    \left( (1+b)^{1-\gamma} - b^{1-\gamma} \right) \leq (1-\gamma)b^{-\gamma}, \quad 
    & \left( (1+b)^{1+\frac{\gamma}{p-1}} -b^{1+\frac{\gamma}{p-1}}  \right)^{p-1} \leq \left( 1+\frac{\gamma}{p-1}  \right)^{p-1}(b+1)^{\gamma}.
\end{align*}
Hence
\begin{align*}
&\sup_{ (y,z)\in I, y<z<0 }\frac{1}{(z-y)^p} \int_y^{z} \omega_\gamma(x)dx \left(\int_y^{z} {\omega_\gamma(x)}^{\frac{-1}{p-1}}dx\right)^{p-1} \leq \frac{2^{\gamma}}{1-\gamma}.
\end{align*}

{\it Case 2: } Similar computations can be performed on $[0,1[$. We have for $0<y<z=y+t<1$,
\begin{align*}
&\frac{1}{t^p}\int_y^{y+t}(1-x)^{\gamma(p-1)} dx
\left(\int_y^{y+t}(1-x)^{-\gamma} dx\right)^{p-1}\\
&=
\frac{1}{1+\gamma(p-1)}
\left((1+b)^{1+\gamma(p-1)}-b^{1+\gamma(p-1)}\right)
\left[
\frac{1}{1-\gamma}
\left((1+b)^{1-\gamma}-b^{1-\gamma}\right)
\right]^{p-1},
\end{align*}
where
$
b=\frac{1-y-t}{t}>0.
$
If $b\leq 1$, by convexity and concavity,
$$
(1+b)^{1+\gamma(p-1)}-b^{1+\gamma(p-1)}
\leq (1+\gamma(p-1))2^{\gamma(p-1)}, \quad (1+b)^{1-\gamma}-b^{1-\gamma}\leq 1.
$$
Hence the above quantity is bounded by
$$
\frac{2^{\gamma(p-1)}}{(1-\gamma)^{p-1}}.
$$
If $b\geq1$, again by convexity and concavity,
$$
(1+b)^{1+\gamma(p-1)}-b^{1+\gamma(p-1)}
\leq (1+\gamma(p-1))(1+b)^{\gamma(p-1)}, \quad (1+b)^{1-\gamma}-b^{1-\gamma}
\leq (1-\gamma)b^{-\gamma}.
$$
Thus the above quantity is bounded by
$$
\left(\frac{1+b}{b}\right)^{\gamma(p-1)}
\leq 2^{\gamma(p-1)}.
$$
Consequently,
$$
\sup_{0<y<z<1}
\frac{1}{(z-y)^p}
\int_y^z\omega_\gamma(x),dx
\left(
\int_y^z\omega_\gamma(x)^{-\frac1{p-1}},dx
\right)^{p-1}
\leq
\frac{2^{\gamma(p-1)}}{(1-\gamma)^{p-1}}.
$$



{\it Case 3.} Let us take $-1<y<0<z<1$ and $t=z-y$ and compute
\begin{align*}
   &\frac{1}{t^p}  \int_y^{z} \omega_\gamma(x)dx \left(\int_y^{z} {\omega_\gamma(x)}^{\frac{-1}{p-1}}dx\right)^{p-1} =\\  
   &\frac{1}{t^p}  \left(\int_y^{0}(1+x)^{-\gamma} dx + \int_0^{z} (1-x)^{(p-1)\gamma} dx\right) \left(\int_y^{0} (1+x)^{\frac{\gamma}{p-1}} dx +\int_0^{z} (1-x)^{-\gamma} dx\right)^{p-1}.
\end{align*}
Denote $\Tilde{t}=\max\{-y,z\}$. We have $1+y\geq 1-\Tilde{t}$, $1-z \geq 1-\Tilde{t}$ and lastly, since $t=z-y \geq \Tilde{t}$, $\frac{1}{t^p} \leq \frac{1}{\Tilde{t}^p}$. Hence
\begin{align*}
    &\frac{1}{t^p}  \int_y^{z} \omega_\gamma(x)dx \left(\int_y^{z} {\omega_\gamma(x)}^{\frac{-1}{p-1}}dx\right)^{p-1}\leq\\
    & \frac{1}{\Tilde{t}^p}  \left(\int_{-\Tilde{t}}^{0}(1+x)^{-\gamma} dx + \int_0^{\Tilde{t}} (1-x)^{(p-1)\gamma} dx\right) \left(\int_{-\Tilde{t}}^{0} (1+x)^{\frac{\gamma}{p-1}} dx +\int_0^{\Tilde{t}} (1-x)^{-\gamma} dx\right)^{p-1}\\
    &= \left( \frac{1}{1-\gamma} \frac{I_1}{\Tilde{t}} + \frac{1}{1+\gamma(p-1)} \frac{I_2}{\Tilde{t}} \right) \left( \frac{1}{1+\frac{\gamma}{p-1}}\frac{I_3}{\Tilde{t}} + \frac{1}{1-\gamma} \frac{I_4}{\Tilde{t}} \right)^{p-1} :=A,
\end{align*}
where
\begin{align*}
    I_1 =(1-(1-\Tilde{t})^{1-\gamma}), \quad
    &I_2= (1-(1-\Tilde{t})^{1+\gamma(p-1)}) \\
    I_3= (1-(1-\Tilde{t})^{1+\frac{\gamma}{p-1}}), \quad
   &I_4=(1-(1-\Tilde{t})^{1-\gamma}).
\end{align*}
By concavity and convexity, we have
\begin{align*}
    \frac{I_1}{\Tilde{t}} \leq 1, \quad
    &\frac{I_2}{\Tilde{t}} \leq 1+\gamma(p-1) \\
    \frac{I_3}{\Tilde{t}} \leq 1+\frac{\gamma}{p-1}, \quad
   &\frac{I_4}{\Tilde{t}}\leq 1.
\end{align*}
And at the end we have, 
$$A\leq \left( \frac{1}{1-\gamma}  + 1 \right) \left( 1 + \frac{1}{1-\gamma}  \right)^{p-1} = \left( 1 + \frac{1}{1-\gamma}  \right)^{p}\leq \left(\frac{2}{1-\gamma}  \right)^{p}.$$
One concludes as $\frac{2^{\gamma}}{1-\gamma}$ and $\frac{2^{\gamma(p-1)}}{(1-\gamma)^{p-1}}$ are majorized by $\left(\frac{2}{1-\gamma}  \right)^{p}$.
\end{proof}
The case $\gamma=1$ is handled similarly but on smaller intervals. We leave the details to the reader.
\begin{lemma}\label{gam1}
Let $1<p<\infty$ and $N\geq 10$. Then, 
$$[\omega_1]_{\{A_p,]-1+\frac{1}{N},1-\frac{1}{N}[\}}\leq \big(1+2\ln(N)\big)^p.$$ 
\end{lemma}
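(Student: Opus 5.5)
I will follow exactly the three-case scheme of Lemma \ref{estAp}, now on $J=]-1+\frac1N,1-\frac1N[$ with $\gamma=1$. Here $\omega_1(x)=(1+x)^{-1}$ for $x\leq 0$ and $\omega_1(x)=(1-x)^{p-1}$ for $x>0$. The dual weight is $\omega_1^{-\frac1{p-1}}(x)=(1+x)^{\frac1{p-1}}$ for $x\leq0$ and $(1-x)^{-1}$ for $x>0$. The only change from the case $\gamma<1$ is that the factor $\frac1{1-\gamma}$, which came from $\int (1+x)^{-\gamma}dx$, is now replaced by a logarithm. The restriction to $J$ keeps $1\pm x\geq \frac1N$, so these logarithms are at most $\ln N$.

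\emph{Case 1} ($-1+\frac1N<y<z\leq 0$). With $t=z-y$ and $b=\frac{y+1}{t}$, the first factor is $\frac1t\int_y^z(1+x)^{-1}dx=\frac{\ln(1+1/b)}{1}\cdot\frac{1}{1}$, i.e. $\ln\big(\frac{1+b}{b}\big)$ after normalisation. The second factor is $\big(\frac1t\int_y^z(1+x)^{\frac1{p-1}}dx\big)^{p-1}$, which as before is at most $(1+b)$. For $b\geq1$ we get $\ln(1+1/b)(1+b)\leq \frac{1+b}{b}\leq 2$. For $b\leq 1$ we use $(1+b)\leq 2$ and $\ln(1+1/b)\leq \ln(1+t/(y+1))\leq \ln(2N)$, since $y+1\geq \frac1N$ and $t\leq 2$. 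So this case is bounded by $2\ln(2N)\leq (1+2\ln N)^p$ once $N\geq 10$. \emph{Case 2} ($0\leq y<z<1-\frac1N$) is symmetric. The factor $\frac1t\int_y^z(1-x)^{p-1}dx$ is controlled by convexity by roughly $(1-y)^{p-1}\leq (t(1+b))^{p-1}/t^{p-1}$. The factor $\big(\frac1t\int_y^z(1-x)^{-1}dx\big)^{p-1}=\big(\frac1t\ln\frac{1-y}{1-z}\big)^{p-1}$ is bounded by $b^{-(p-1)}$ when $b\geq1$, and by $(\ln(2N))^{p-1}$ when $b\leq1$, using $1-z\geq\frac1N$. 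The product is therefore at most $2^{p-1}\max(1,\ln(2N))^{p-1}$, which is again $\leq (1+2\ln N)^p$.

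\emph{Case 3} ($y<0<z$). As in Lemma \ref{estAp}, set $\tilde t=\max(-y,z)\leq 1-\frac1N$ and bound the quantity by the symmetric interval expression
$$\Big(\frac{1}{\tilde t}\int_{-\tilde t}^0\frac{dx}{1+x}+\frac{1}{\tilde t}\int_0^{\tilde t}(1-x)^{p-1}dx\Big)\Big(\frac1{\tilde t}\int_{-\tilde t}^0(1+x)^{\frac1{p-1}}dx+\frac1{\tilde t}\int_0^{\tilde t}\frac{dx}{1-x}\Big)^{p-1}.$$
The two integrands $(1-x)^{p-1}$ and $(1+x)^{\frac1{p-1}}$ have averages at most $1$. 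The two logarithmic averages both equal $\frac{-\ln(1-\tilde t)}{\tilde t}$. This quantity is increasing in $\tilde t$, so it is at most $\frac{\ln N}{1-1/N}$. For $\tilde t\leq\frac12$ it is bounded by $2\ln 2\leq 2\ln N$ anyway; otherwise $\frac1{\tilde t}\leq 2$ gives the bound $2\ln N$. Hence each bracket is at most $1+2\ln N$, and the whole expression is at most $(1+2\ln N)^p$.

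The main obstacle is bookkeeping the constants so that all three cases fit under the single bound $(1+2\ln N)^p$. In Case 3 this works cleanly because of the factor $2$ coming from $\tilde t\geq\frac12$ or from small $\tilde t$. In Cases 1 and 2 I would first try crude bounds like $\ln(2N)\leq 1+2\ln N$ and $2\leq 1+2\ln N$, valid for $N\geq10$. If those are too lossy, I would instead bound the relevant case by reducing it to Case 3 via monotone enlargement of the interval, as in the concavity/convexity estimates already used.
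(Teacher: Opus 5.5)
Your three-case scheme is the one the paper intends. The paper only says that the case $\gamma=1$ is handled as in Lemma~\ref{estAp} and leaves the details to the reader. Your Case 3 is correct, and it is exactly where the constant comes from: $-\ln(1-\tilde t)/\tilde t\leq \frac{N}{N-1}\ln N\leq 2\ln N$, while the two other averages are at most $1$.

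The gap is in the final comparisons of Cases 1 and 2, which are false on part of the range $1<p<\infty$.

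In Case 1 you end with $2\ln(2N)$. But $2\ln(2N)=2\ln 2+2\ln N>1+2\ln N=\lim_{p\to1^+}(1+2\ln N)^p$, since $2\ln 2>1$. For $N=10$, your claimed inequality $2\ln(2N)\leq(1+2\ln N)^p$ fails for every $p\in(1,1.03)$. Also, your justification from $t\leq 2$ and $y+1\geq\frac1N$ only gives $\ln(1+2N)$, not $\ln(2N)$.

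In Case 2 you end with $(2\ln(2N))^{p-1}$. For $N=10$ this exceeds $(1+2\ln N)^p$ as soon as $p\geq 27$.

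Neither fallback you mention repairs this. The crude bound $\ln(2N)\leq 1+2\ln N$ is not the inequality you need, because the logarithm is multiplied by $1+b\leq 2$. Reducing Cases 1 or 2 to Case 3 by enlarging $[y,z]$ to a symmetric interval $[-\tilde t,\tilde t]$ also does not work. The normalisation $(z-y)^{-p}$ then costs a factor $(\tilde t/(z-y))^p$, which is unbounded for short intervals near the endpoints.

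The fix is to use where the interval sits, instead of $t\leq 2$.

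\emph{Reduction.} After your normalisation, Case 1 is at most $(1+b)\ln(1+\frac1b)$ with $b=\frac{1+y}{t}$. For Case 2, take $b=\frac{1-z}{t}$ and use $\frac1p\big((1+b)^p-b^p\big)\leq(1+b)^{p-1}$; the quantity is then at most $\big[(1+b)\ln(1+\frac1b)\big]^{p-1}$.

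\emph{Bounding $1+\frac1b$.} In Case 1, $z\leq0$ gives $1+\frac1b=\frac{1+z}{1+y}<N$. In Case 2, $y\geq 0$ gives $1+\frac1b=\frac{1-y}{1-z}<N$.

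\emph{Conclusion.} Hence $(1+b)\ln(1+\frac1b)\leq 2$ for $b\geq1$ and $\leq 2\ln N$ for $b\leq 1$. Since $N\geq 10$, this is at most $2\ln N$ in both cases. Case 1 is then bounded by $2\ln N<1+2\ln N\leq(1+2\ln N)^p$, and Case 2 by $(2\ln N)^{p-1}\leq(1+2\ln N)^p$. Together with your Case 3, this gives the lemma with the stated constant.
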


\subsection{Examples}

We will construct our examples thanks to translations using the weights $\omega_\gamma$ restricted to $\Big[-1+\frac 1 N,1-\frac 1 N\Big]$
by a dilation by a factor $\frac{N-1}{N^2}$ and a translation by $N$, that is $\omega_\gamma^N(x)=\omega_\gamma\Big(\frac {N-1}{N^2} (x-N)\Big)$. In all this section we fix $1<p<\infty$.

\begin{notation}
We consider the $L_p$-space   $B_p^N=L_p( [0,2N],\omega_\gamma^N)$, where $$\omega_\gamma^N(x) = \left\{
\begin{array}{ll}
{N^{\gamma}}{\big(x(1-\frac{1}{N}\big)+1)^{-\gamma}}, & 0\leq x\leq N\\
{N^{-\gamma(p-1)}}{\big(2N-1-x(1-\frac{1}{N})\big)^{\gamma(p-1)}},& N<x\leq 2N.
\end{array}
\right. $$
The left translation semigroup on  $B_N$ is denoted by $(T_{t}^{\gamma,N})_{t\geq 0}$, for $f\in B_p^N$ and $x\in (0,2N)$:
$$T_{t}^{\gamma,N}(f)(x)=\left\{
\begin{array}{ll}
f(x+t),&  x+t\leq 2N\\
0,& x+t>2N.
\end{array}
\right.$$ 
\end{notation}

\begin{notation}
 We consider the $L_p$-space  $B=\ell_p\Big((B_p^N)_{N\geq 2}\Big)$ with the direct sum semigroup
 $(T^\gamma_t)_{t\geq0}:B\rightarrow B$ defined by $T_t^\gamma \Big((f_N)_{N\geq2}\Big) = \Big(T_{t}^{\gamma,N} (f_N)\Big)_{N\geq2}$.
\end{notation}
The main result of this section is
\begin{thm}\label{exam}
For $0<\gamma<1$, $(T_t^\gamma)_{t\geq 0}$ is a positive $C_0$-semigroup on a commutative $L_p$-space that is Kreiss bounded with $\|T^\gamma_t\|\geq K_\gamma  t^{\gamma}$ for some $K_\gamma>0$.
\end{thm}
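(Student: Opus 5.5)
Write $I_N=[0,2N]$. The plan has three steps: show that each $T^{\gamma,N}$ is a positive $C_0$-semigroup; get a Kreiss bound uniform in $N$ from the $A_p$ estimates; and produce explicit test functions showing $\|T^\gamma_t\|\gtrsim t^\gamma$.

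\emph{Semigroup properties.} Each $T^{\gamma,N}$ is the truncated left translation on a bounded interval. The weight $\omega_\gamma^N$ is bounded above and below on $I_N$ for fixed $N$, because of the truncation to $[-1+\frac1N,1-\frac1N]$. Hence $B_p^N$ is isomorphic to $L_p(I_N)$, and $T^{\gamma,N}$ is a positive $C_0$-semigroup that is nilpotent ($T^{\gamma,N}_t=0$ for $t\geq 2N$). The direct sum is then positive and consists of contractions modulo a uniform bound on $\|T^{\gamma,N}_t\|$ over $t\le 1$. For $t\le1$ the ratio $\omega^N_\gamma(x)/\omega^N_\gamma(x+t)$ stays bounded uniformly in $N$ because of the dilation. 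Strong continuity on $B$ then follows by density of finitely supported sequences.

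\emph{Kreiss boundedness.} By Proposition \ref{kreisscescont}, applied in the commutative case, it suffices to bound the Ces\`aro constant uniformly in $N$. The weight $\omega_\gamma^N$ is obtained from $\omega_\gamma$ restricted to $]-1+\frac1N,1-\frac1N[$ by an affine change of variables; the constant multiplicative factors do not affect \eqref{Ap}. Lemma \ref{inv}, together with translation invariance, therefore gives
$[\omega_\gamma^N]_{\{A_p,I_N\}}\le[\omega_\gamma]_{\{A_p,]-1,1[\}}\le (2/(1-\gamma))^p$ by Lemma \ref{estAp}. Note that the supremum over a subinterval is smaller. The proposition on Ces\`aro boundedness of translations then gives $C_{T^{\gamma,N}}\le 4/(1-\gamma)$. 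Since Ces\`aro means act diagonally on the $\ell_p$-sum, $C_{T^\gamma}\le 4/(1-\gamma)$, and hence $K_{T^\gamma}\le 16/(1-\gamma)$.

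\emph{Lower bound.} This is the main computation. Fix $t\ge1$ and take $N$ comparable to $t$, say $N=\lceil t\rceil$, so that the translation by $t$ moves mass from the region $x>N$ near $2N$, where the weight is small, into the region near $0$, where the weight is large. More precisely, I would take $f=\chi_{[a,a+1]}$ with $a=t$ (adjusted so that $a+1\le 2N$), and compare $\|T_tf\|_p^p=\int_0^1\omega^N_\gamma$ with $\|f\|_p^p=\int_t^{t+1}\omega^N_\gamma$. Near $0$ we have $\omega_\gamma^N\approx N^\gamma$. At $x\in[t,t+1]$ with $t\approx 2N-c$, the weight behaves like $N^{-\gamma(p-1)}(\text{const})^{\gamma(p-1)}$. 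The ratio is then about $N^{\gamma}\cdot N^{\gamma(p-1)}=N^{\gamma p}$, giving $\|T_t\|\gtrsim N^\gamma\approx t^\gamma$.

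To make the scaling honest, I would choose $N$ with $2N-1-t\approx 1$, for instance $t=2N-2$, so that $f$ sits where $(2N-1-x(1-\frac1N))$ is $O(1)$. For general $t$ I would use monotonicity in $N$, or pick $N=\lfloor t/2\rfloor+1$ and a support interval of length one inside $[t,2N]$. The expected obstacle is the bookkeeping: the support must stay inside $I_N$ and the weight bounds must hold uniformly; the latter follow from the explicit formulas. Small $t$ are handled by the boundedness of $\|T_t\|\ge$ const on compact sets, and this yields $K_\gamma$.
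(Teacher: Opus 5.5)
Your proposal is correct and is essentially the paper's proof. The uniform bound on $\|T^{\gamma,N}_t\|$ for $t\le 1$ gives the $C_0$ property; Kreiss boundedness comes from Proposition \ref{kreisscescont}, the Ces\`aro estimate for weighted translations, Lemma \ref{inv} and Lemma \ref{estAp}; and the lower bound comes from taking $t\in(2N-2,2N)$, so that $\omega_\gamma^N(x-t)/\omega_\gamma^N(x)\gtrsim N^{\gamma p}$ on $(t,2N)$.

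There is one bookkeeping slip: with $N=\lfloor t/2\rfloor+1$ the interval $[t,2N]$ can be shorter than $1$. Either support the test function on $[t,2N]$ itself (or just use the pointwise weight ratio, as the paper does), or take $N=\lfloor t/2\rfloor+2$.
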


The proof will will consist in verifying all facts. We start with norm estimates.
\begin{lemma}
Let $N\geq 2$ and $0<\gamma\leq 1$, we have the estimates
\begin{enumerate}[(i)]
\item $ \forall t\in(2N-2;2N)$,
$\big\|T^{\gamma,N}_t \big\|\geq \left( \frac{N}{3} \right)^{\gamma},$
\item $\forall t \in(0,1) $ ,
$\big\|T^{\gamma,N}_t\big\| \leq 2^{\gamma}.$
\end{enumerate}
\end{lemma}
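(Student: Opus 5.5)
The plan is to compute the norm of $T^{\gamma,N}_t$ on $B_p^N$ exactly, through a change of variables, and then estimate the resulting ratio of weights. Write $w=\omega^N_\gamma$ and $c=1-\frac1N$. For $f\in B_p^N$, substituting $y=x+t$ gives
$$\|T^{\gamma,N}_t f\|_p^p=\int_0^{2N-t}|f(x+t)|^p w(x)\,dx=\int_t^{2N}|f(y)|^p\,\frac{w(y-t)}{w(y)}\,w(y)\,dy .$$
This is a multiplication operator in disguise, so
$$\|T^{\gamma,N}_t\|^p=\operatorname*{ess\,sup}_{y\in(t,2N)}\frac{w(y-t)}{w(y)}.$$
Both (i) and (ii) then reduce to elementary bounds on this ratio. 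Two features of $w$ will be used.
\begin{itemize}
\item $w$ is continuous at $N$ with $w(N)=1$, since $Nc+1=N$ and $2N-1-Nc=N$.
\item $w$ is decreasing on $[0,N]$ and on $[N,2N]$. On the second piece the base $2N-1-yc$ stays $\geq 1$, and it equals $1$ at $y=2N$.
\end{itemize}

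\textbf{Lower bound (i).} Let $t\in(2N-2,2N)$ and take $y$ close to $2N$. Then $y-t\in(0,2)$, so $w(y-t)\geq N^\gamma(2c+1)^{-\gamma}\geq N^\gamma 3^{-\gamma}$. Near $y=2N$ we have $w(y)\to N^{-\gamma(p-1)}$. By continuity of $w$, the essential supremum of the ratio is therefore at least
$$\frac{N^{\gamma}}{3^{\gamma}}\,N^{\gamma(p-1)}=3^{-\gamma}N^{\gamma p}\geq \Big(\frac N3\Big)^{\gamma p}.$$
Taking $p$-th roots gives $\|T^{\gamma,N}_t\|\geq (N/3)^\gamma$.

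\textbf{Upper bound (ii).} Let $t\in(0,1)$. Split according to the positions of $y-t$ and $y$ relative to $N$.
\begin{itemize}
\item If $y\leq N$, then
$$\frac{w(y-t)}{w(y)}=\Big(\frac{yc+1}{(y-t)c+1}\Big)^{\gamma}\leq (1+tc)^\gamma\leq 2^\gamma,$$
because the denominator is $\geq1$ and the difference of the bases is $tc\leq 1$.
\item If $y-t\geq N$, then
$$\frac{w(y-t)}{w(y)}=\Big(\frac{2N-1-(y-t)c}{2N-1-yc}\Big)^{\gamma(p-1)}\leq (1+tc)^{\gamma(p-1)}\leq 2^{\gamma(p-1)},$$
since now the denominator is $\geq 1$ and the difference is again $tc$.
\item In the mixed case $y-t<N<y$, write
$$\frac{w(y-t)}{w(y)}=\frac{w(y-t)}{w(N)}\cdot\frac{w(N)}{w(y)}.$$
Apply the two previous bounds to each factor, with shifts $N-(y-t)$ and $y-N$, both at most $1$. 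This gives at most $2^\gamma\, 2^{\gamma(p-1)}=2^{\gamma p}$.
\end{itemize}
In every case the ratio is $\leq 2^{\gamma p}$, so $\|T^{\gamma,N}_t\|\leq 2^\gamma$.

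The only delicate point is bookkeeping. The exponent $p$ in the norm formula must be handled correctly: in (i) this is exactly what lets the factor $N^{\gamma(p-1)}$ coming from the right endpoint upgrade $N^{\gamma}$ to $N^{\gamma p}$, and in (ii) it absorbs the product $2^{\gamma}2^{\gamma(p-1)}$. The other thing to check is that the bases $yc+1$ and $2N-1-yc$ stay $\geq 1$ on the whole interval $[0,2N]$.
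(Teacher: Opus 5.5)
Your proof is correct and follows essentially the same route as the paper. You use the same formula $\|T^{\gamma,N}_t\|^p=\sup_{y\in(t,2N)} w(y-t)/w(y)$, evaluate near the right endpoint for (i), and use the same three-case split for (ii); the only cosmetic difference is in the mixed case, where you factor through $w(N)=1$ and reuse the two pure cases, whereas the paper uses monotonicity to bound the ratio by $w(N-1)/w(N+1)=(1-\frac1N+\frac1{N^2})^{-p\gamma}\le 2^{p\gamma}$.
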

\begin{proof}
    Since the weights are continuous, it is classical that $\big\|T^{\gamma,N}_t \big\|^p=\sup_{x\in(t,2N)}\frac{\omega_\gamma^N(x-t)}{\omega^N_\gamma(x)}.$
    
    For $(i)$, since $t\in (2N-2,2N)$, for $x>t$  and $N\geq 2$, we have that $x-t\leq N$ and $x> N$ so
        \begin{align*}
        \frac{\omega^N_\gamma(x-t)}{\omega^N_\gamma(x)}
        = \frac{N^{\gamma p}}{(x-t+1)^{\gamma} (2N+1-x)^{\gamma(p-1)} } \geq
         \frac{N^{\gamma p}}{3^{\gamma} 3^{\gamma(p-1)} } = \frac{N^{\gamma p}}{3^{\gamma p} }.
    \end{align*}
    
    For $(ii)$, let $0 < t < 1$. We estimate according to the cases $x<N$, $x-t>N$ and $x-t\leq N\leq x$.
    
    In the first case, we have 
        \begin{align*}
       \frac{\omega^N_\gamma(x-t)}{\omega^N_\gamma(x)}  =\frac{(x(1-\frac{1}{N})+1)^{\gamma}}{((x-t)(1-\frac{1}{N})+1)^{\gamma}} = \frac{(u+1)^{\gamma}}{ (v+1)^{\gamma} } \leq 2^{\gamma},
    \end{align*}
  as $v\leq u\leq v+1$  distinguishing $u>1$ and $u\leq 1$.
  
    In the second, we have by symmetry with $y=2N-x$
        \begin{align*}
      \frac{\omega^N_\gamma(x-t)}{\omega^N_\gamma(x)}  =  
          \frac{(1+ (1-\frac 1 N)(y-t))^{\gamma(p-1)}}{(1+(1-\frac 1 N)y)^{\gamma(p-1) }} 
         \leq 2^{\gamma(p-1)}.
    \end{align*}
    
    In the third case, as $x-t\geq N-1$ and $x\leq N+1$, we get
    \begin{align*}
        \frac{\omega^N_\gamma(x-t)}{\omega^N_\gamma(x)} \leq \frac{\omega^N_\gamma(N-1)}{\omega^N_\gamma(N+1)}=\Big(1-\frac 1 N +\frac 1 {N^2}\Big)^{-p\gamma}\leq 2^{p\gamma}.
            \end{align*}
  \end{proof}
\begin{proof}[Proof of Theorem \ref{exam}]
Since $(ii)$ above is independent of $N$, it follows that we also have the same estimate 
\begin{align}\label{estm}
M_{T^\gamma}=\sup_{t\in(0,1)}\big\|T^{\gamma}_t\big\| \leq 2^{\gamma}.
\end{align}

    This  easily implies that $(T_t^\gamma)_{t\geq 0}$ is a $C_0$-semigroup.

On the other hand, $(i)$ implies $\|T^\gamma_t\|\geq K_\gamma  t^{\gamma}$ for some $K_\gamma>0$. 

Thus it remains to check that $T^\gamma$ is Kreiss bounded that is equivalent to $T^\gamma$ being Cesàro bounded by Proposition \ref{kreisscescont}. As $T^\gamma$ is a direct sum, thanks to Lemma \ref{inv}, it follows directly from Lemma \ref{estAp} that 
\begin{align}\label{estK}
K_{T^\gamma}=\sup_{N\geq 2} K_{T^{\gamma,N}}\leq \frac C {1-\gamma}.
\end{align}
\end{proof}


\begin{rk}
The following quantitative estimate was obtained in \cite{Arnold}:
$$\|T_t\|\leq C M_T^2K_T^2 \frac t{(\ln t)^{1/p}}.$$
When $\gamma=1$, the semigroup $T^{1,N}$ shows that it is almost optimal. Indeed using \eqref{estm}, if $t=N$, we get
 $$M_{T^{1,N}}^2K_{T^{1,N}}^2\frac{N}{(\ln N)^{1/p}}\lesssim (\ln N)^{2-\frac 1p}N, \quad\textrm{but } \big\|T^{1,N}_N\big\| \gtrsim N.$$
\end{rk}

We end with a discrete example.
\begin{thm}\label{examd}
For $0<\gamma<1$, $T_1^\gamma$ is a positive Kreiss bounded operator on a commutative $L_p$-space with $\|{T^\gamma_1}^n\|\geq K_\gamma  n^{\gamma}$ for some $K_\gamma>0$ and $n\geq1$.
\end{thm}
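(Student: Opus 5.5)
We work with the operator $S=T_1^\gamma$ on $B$, the time-one map of the semigroup of Theorem \ref{exam}. It is positive, since it is a weighted-space translation on a commutative $L_p$. Its powers are $S^n=T^\gamma_n$. For the lower bound, take $N\geq 2$ with $n\in(2N-2,2N)$, e.g.\ $N=\lfloor n/2\rfloor+1$; note $n=1$ falls in $(0,2)$ with $N=1$, so for the first few $n$ we just adjust the constant. Part (i) of the norm lemma then gives $\|S^n\|\geq\|T^{\gamma,N}_n\|\geq (N/3)^\gamma\gtrsim n^\gamma$. For the finitely many small $n$ the constant is absorbed using $\|S^n\|>0$: indeed $T^{\gamma,N}_n\neq 0$ for $2N>n$. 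This yields $K_\gamma$.

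By Proposition \ref{kreissces} it suffices to show that $S$ is Ces\`aro bounded, i.e.\ that $\big\|\frac1{n+1}\sum_{k=0}^n S^k\big\|$ is bounded uniformly in $n$. We compare the discrete Ces\`aro means with the continuous ones via positivity. For $u\in[k,k+1]$ we have $S^k=T_k\geq 0$, and $T_u=T_{u-k}T_k$. Rather than bounding $T_k$ by $T_u$ directly, we write
\[
\int_0^{1}T_s\,ds\,\sum_{k=0}^n S^k=\int_0^{n+1}T_u\,du ,
\]
so the mean must be recovered from a positive operator on the left. The cleaner route is to bound $\sum_k S^k$ pointwise by an integral of the semigroup. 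Since the weight $\omega^N_\gamma$ is comparable on intervals of length $1$ (part (ii) gives the ratio bound $2^{\gamma p}$), for $f\geq 0$ we get
\[
S^k f(x)=f(x+k)\;\leq\; C\int_0^1 T_{k+s}(\tilde f)(x)\,ds ,
\]
which is not pointwise true in general. We therefore prove discrete Ces\`aro boundedness directly by the Schur test, Proposition \ref{schurtest}, mimicking the proof that $A_p$ implies Ces\`aro boundedness.

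After conjugating by $\iota,\kappa$, the operator $\frac1{n}\sum_{k=0}^{n-1}S^k$ on each $B^N_p$ becomes the positive operator $\frac1n\sum_k \frac{\omega(x)^{1/p}}{\omega(x+k)^{1/p}}\delta_{y=x+k}$. Its $p$-norm is obtained by decomposing $[0,2N]$ into the fibres $x\in[r,r+1)$ and applying the discrete Schur test on $\ell_p(\{r+k\})$ with weights $\omega(r+k)$. This yields the bound $2[\omega]^{1/p}_{\{A_p\}}$ for the discrete $A_p$ characteristic of the sequences $(\omega(r+k))_k$, with the same choice of $A,B$ and sums in place of integrals. The main obstacle is to bound these discrete characteristics uniformly in $r$ and $N$. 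We will use the comparability $\omega(r+k+s)\asymp\omega(r+k)$ for $s\in[0,1]$ (constant $2^{\gamma p}$ from the ratio estimates of part (ii)). This lets us replace sums over $k\in[a,b]$ by integrals over $[r+a,r+b+1]$ up to constants, and the interval lengths are comparable because they are at least $1$. Hence the discrete characteristic is bounded by a constant times $[\omega^N_\gamma]_{\{A_p,[0,2N]\}}$, which by Lemma \ref{inv} and Lemma \ref{estAp} is at most $C(2/(1-\gamma))^p$, independently of $N$. Taking the supremum over the direct sum gives $C_S<\infty$, and thus $S$ is Kreiss bounded by Proposition \ref{kreissces}.
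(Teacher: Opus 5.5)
Your proof is correct, but for Kreiss boundedness it takes a different route from the paper's.

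\textbf{Your route.} You prove discrete Ces\`aro boundedness from scratch. You fibre $(0,2N)$ over $r\in[0,1)$ into the orbits $r+\{0,\dots,2N-1\}$, run the Schur test with sums on each weighted $\ell_p$, and bound the discrete $A_p$ characteristics of $(\omega^N_\gamma(r+k))_k$ by the continuous one, using the unit-scale comparability of $\omega^N_\gamma$.

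\textbf{The paper's route.} It transfers the continuous Ces\`aro bound already obtained in Theorem \ref{exam}. It replaces $\omega^N_\gamma$ by the equivalent weight $\tilde\omega^N_\gamma(t)=\omega^N_\gamma([t])$ and identifies $T_1$ with $Id\otimes T_1^d$. It then compresses the continuous means by the conditional expectation $\mathbb E$ onto functions constant on integer cells. Since $M_k=\frac1k\sum_{l=0}^{k-1}T_l\int_0^1T_u\,du$, $\mathbb E$ commutes with integer translations, and $\mathbb E\big(\int_0^1T_u\,du\big)(e_i)=\frac12(e_{i-1}+e_i)\geq\frac12T_1(e_i)$, one gets $\frac1{2k}\sum_{l=1}^k(T_1^d)^l\leq\mathbb E M_k$. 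Corollary \ref{monot} then concludes.

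So the identity $\int_0^1T_s\,ds\sum_kS^k=\int_0^{n+1}T_u\,du$ that you discarded is exactly the paper's starting point. What you were missing to make it work is the compression by $\mathbb E$, after which positivity recovers the discrete mean. The paper's argument is shorter and is a general transference (continuous Ces\`aro bound plus unit-scale comparability gives a discrete Ces\`aro bound) needing no new $A_p$ computation. Yours is self-contained, avoids changing the weight, and gives an explicit constant, at the price of the discrete-versus-continuous $A_p$ comparison.

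\textbf{Details to tidy up.}
\begin{enumerate}
\item Part (ii) only gives $\omega(y)\leq 2^{\gamma p}\omega(y+s)$. The reverse bound, which you need for the sums of $\omega^{-1/(p-1)}$, comes from $\omega^N_\gamma$ being decreasing.
\item When $b=2N-1$ and $r>0$, the interval $[r+a,r+b+1]$ leaves $[0,2N]$. Extending the weight formula past $2N$ does not help, since comparability fails near $2N+1$. Instead, compare the last term with the cell $[r+b-1,r+b]$; this costs only a constant (the case $a=b$ is trivial).
\item For even $n$, your choice $N=n/2+1$ gives $n=2N-2$, which is the excluded endpoint in (i). The proof of (i) works verbatim at $t=2N-2$; alternatively use $\|S^n\|\geq\|S^{n+1}\|/\|S\|$.
\item Remove the two abandoned attempts from the write-up.
\end{enumerate}
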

\begin{proof} We keep the notation of the proof of Theorem \ref{exam}.
    We already know the norm estimates, it remains to prove that it is Kreiss. As $T_1^\gamma$ is a direct sum it suffices to prove it for all $T_1^{\gamma,N}$ uniformly in $N$. So fix $N\geq 2$ and consider the weight on $(0,2N)$ given by $\tilde \omega_\gamma^N(t)=\omega_\gamma^N([t])$. We have already seen that $1\leq \tilde \omega_\gamma^N(t)/ \omega_\gamma^N(t)\leq 2^{p\gamma}$. Thus loosing a factor $2^\gamma$, it suffices to evaluate the Kreiss constant of the translation by 1 on $L_p((0,2N),\tilde \omega_\gamma^N)$. We denote by $T_t$
    the translation by $t$ to lighten notation.  
    
    There is an obvious identification  $$L_p((0,2N),\tilde \omega_\gamma^N)=L_p\Big((0,1);\ell_p(\{0, \ldots, 2N-1\},\tilde \omega_\gamma^N)\Big)=L_p(0,1)\otimes_p \ell_p(\{0, \ldots, 2N-1\},\tilde \omega_\gamma^N).$$
    With it, $T_1$ acts like $Id\otimes  T_1^d$, where $T_1^d$ is a discrete translation by 1.  By the Fubini theorem, the Kreiss constant of $T_1$ is the same as that of $T_1^d$. Note also that 
    $\ell_p(\{0, \ldots, 2N-1\},\tilde \omega_\gamma^N)$ can be identified with piecewise constant functions on integer intervals in $L_p((0,2N),\tilde \omega_\gamma^N)$ and 
    $T_1^d$ is simply the restriction of $T_1$.
    
    By the continuous example and equivalence of the weights, there is a universal constant such that for all $T>0$ 
    $$\Big\| M_T=\frac 1 T \int_0^T  T_t dt\Big\|_{B\big(L_p((0,2N),\tilde \omega_\gamma^N)\big)}\leq K.$$ 
    
    The conditional expectation on piecewise constant functions on $L_p((0,2N),\tilde \omega_\gamma^N)$ is simply given by 
    $(\mathbb E f)(t)=\int_{[t]}^{[t+1]} f(u)du$ as the weight is constant on integer intervals. Let $e_i=\chi_{(i,i+1)}$for $i\in \{0,...,2N-1\}$. Then for $n\in N^*$, a direct computation gives that 
    $\mathbb E \Big(\int_0^1 T_u du\Big)(e_i)\geq \frac 12 e_{i-1}=\frac 12 T_1(e_i)$ (with $e_{-1}=0$). 
    
It follows that, for $k\in \bN^*$, $\mathbb E {M_k}_{|\ell_p(\{0, 2N-1\},\tilde \omega_\gamma^N)}\geq \frac 1 {2k} \sum_{l=1}^k (T^d_1)^l$. Hence we get that the Kreiss constant of $T_1^d$, that is the same as $T_1$, is bounded by at most $16K$ and the proof is over.
\end{proof}


\section*{Artificial Intelligence Statement }

\noindent No AI tools were employed to obtain the results of this paper.

\bibliographystyle{plain}
\bibliography{sample}

\end{document}